\documentclass[12pt,reqno]{amsart}

\usepackage{amsmath}
\usepackage{amssymb}
\usepackage{graphicx}
\usepackage{mathrsfs}
\usepackage{color}
\usepackage{comment}
\usepackage{hyperref}
\usepackage{enumitem}
\usepackage{bm}
\usepackage[margin=1.25in]{geometry}
\usepackage{mnotes}

%theorem environments
\numberwithin{equation}{section}

\newtheorem{prop}{Proposition}[section]
\newtheorem{theo}[prop]{Theorem}
\newtheorem*{theo*}{Theorem}
\newtheorem{lemm}[prop]{Lemma}
\newtheorem{coro}[prop]{Corollary}

\newtheorem{claim}{Claim}
\newtheorem{defi}[prop]{Definition}

\theoremstyle{definition}
\newtheorem{rema}[prop]{Remark}

\newtheorem{quest}[prop]{Question}

% mathbf

\newcommand{\PP}{\mathbf{P}}

\newcommand{\RR}{\mathbf{R}}
\renewcommand{\SS}{\mathbf{S}}

\newcommand{\ZZ}{\mathbf{Z}}

% mathcal

% mathscr

\newcommand{\sM}{\mathscr{M}}

%% mathfrak

% Linear algebra, functional analysis

\DeclareMathOperator{\imag}{im}

\DeclareMathOperator{\rank}{rank}

% Analysis

% Geometry

% Misc

\newcommand{\eps}{\varepsilon}

\newcommand\quot[2]{
	\mathchoice
	{% \displaystyle
		\text{\raise1ex\hbox{$#1$}\Big/\lower1ex\hbox{$#2$}}%
	}
	{% \textstyle
		#1\,/\,#2
	}
	{% \scriptstyle
		#1\,/\,#2
	}
	{% \scriptscriptstyle  
		#1\,/\,#2
	}
}

\numberwithin{equation}{section}

%    Absolute value notation

%    Blank box placeholder for figures (to avoid requiring any
%    particular graphics capabilities for printing this document).

\begin{document}

\title{Decomposing 4-manifolds with positive scalar curvature}

%    Information for first author
\author{Richard H. Bamler}
\address{Department of Mathematics, University of California, Berkeley, Berkeley, CA 94720, USA}
\email{rbamler@berkeley.edu}

\author{Chao Li}
\address{Courant Institute, New York University, 251 Mercer St, New York, NY 10012, USA}
\email{chaoli@nyu.edu}

\author{Christos Mantoulidis}
\address{Department of Mathematics, Rice University, Houston, TX 77005, USA}
\email{christos.mantoulidis@rice.edu}

\begin{abstract}
We show that every closed, oriented, topologically PSC 4-manifold can be obtained via 0 and 1-surgeries from a topologically PSC 4-orbifold with vanishing first Betti number and second Betti number at most as large as the original one. 
\end{abstract}

\maketitle

\section{Introduction}

Our starting point is the following well-known theorem of Schoen--Yau and Gromov--Lawson, which exhibits the richness of the class of manifolds that can carry Riemannian metrics with positive scalar curvature. For convenience and brevity, we will adopt the convention of writing ``PSC'' for ``positive scalar curvature,'' and will call a manifold  ``topologically PSC'' if it can carry Riemannian metrics with positive scalar curvature.

\begin{theo}[{\cite{SchoenYau:structure.psc, GromovLawson:classification}}]  \label{theo:sy.gl.surgery}
	Let $M$ be a topologically PSC $n$-manifold with $n \geq 3$. Any manifold obtained from $M$ by performing a sequence of $0$-, $1$-, ..., and/or $(n-3)$-surgeries is also topologically PSC.
\end{theo}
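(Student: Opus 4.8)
The plan is to recall the classical Gromov--Lawson deformation argument. By induction on the number of surgeries it suffices to handle a single $k$-surgery with $0 \le k \le n-3$; write $m := n-k \ge 3$ for its codimension. So we start from $M^n$ with a positive scalar curvature metric $g$, an embedding $\iota \colon S^k \hookrightarrow M$, and a trivialization of its normal bundle, and the goal is a positive scalar curvature metric on $N = (M \setminus \iota(S^k \times \mathring{D}^m)) \cup_{S^k \times S^{m-1}} (D^{k+1} \times S^{m-1})$.

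The central step is to deform $g$ --- keeping positive scalar curvature and leaving it unchanged outside a small tubular neighborhood of $\Sigma := \iota(S^k)$ --- into a \emph{normal form} in which, near $\Sigma$, it equals the Riemannian product $h \oplus g_{\mathrm{torp}}$, where $h := g|_\Sigma$ and $g_{\mathrm{torp}}$ is a rotationally symmetric ``torpedo'' metric on $D^m$ that caps off smoothly at the center and equals a round cylinder $dt^2 + \rho^2 g_{S^{m-1}}$ near $\partial D^m$, with $\rho$ small. First I would note that this product is already positively curved once $\rho$ is small, since $\mathrm{scal}_{g_{\mathrm{torp}}} \gtrsim \rho^{-2}$ dominates $\mathrm{scal}_h$; this is where the hypothesis $m \ge 3$ enters. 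To produce the normal form I would pass to Fermi coordinates, identifying a tubular neighborhood with $S^k \times D^m_{\bar r}$ so that $g = h \oplus g_{\mathrm{eucl}} + O(r)$ with $r = |x|$, whence the induced metric on the radius-$\rho$ normal sphere bundle is $g_\rho = h \oplus \rho^2 g_{S^{m-1}} + O(\rho)$, of scalar curvature $(m-1)(m-2)\rho^{-2}$ up to lower order --- large and positive for $\rho$ small. Then I would: (i) work inside $(S^k \times D^m_{\bar r}) \times [0,\infty)_t$ with the product metric $g + dt^2$ and replace a neighborhood of $\Sigma \times \{0\}$ by the hypersurface obtained by rotating, about the $S^k$-factor, a curve $\gamma$ in the $(r,t)$ quarter-plane that starts horizontal along $\{t=0\}$ and bends upward until it is the vertical ray $\{r = \rho\}$, so that the resulting hypersurface --- suitably completed --- is again diffeomorphic to $M$ and carries the induced metric, which along the vertical segment is $dr^2 + g_\rho$; and (ii) because $g_\rho$ is $O(\rho)$-close to $h \oplus \rho^2 g_{S^{m-1}}$, homotope between them through metrics of scalar curvature $\gtrsim \rho^{-2}$, insert that homotopy along the cylindrical segment, and cap off with $g_{\mathrm{torp}}$ --- giving the normal form.

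The hard part is the scalar curvature estimate underlying (i). By the Gauss equation, the rotated hypersurface has scalar curvature $\mathrm{scal}_{M \times \RR} - 2\Ric_{M \times \RR}(\nu,\nu) + H^2 - |A|^2$; the term $-|A|^2$ is negative and is controlled by the geodesic curvature of $\gamma$, so the bend must not be too sharp, yet the quarter-turn has to be completed while $r$ stays small enough that the positive contribution $(m-1)(m-2)r^{-2}$ --- again requiring $m \ge 3$ --- dominates it together with the $O(r)$ Fermi-coordinate errors. Balancing these competing constraints is precisely the Gromov--Lawson bending lemma, and it needs a careful rather than naive choice of $\gamma$; everything else is soft.

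With $g$ in normal form the surgery is routine. On the attached handle $D^{k+1} \times S^{m-1}$ I would put the metric $\bar h \oplus \rho^2 g_{S^{m-1}}$, where $\bar h$ is any fixed smooth metric on $D^{k+1}$ agreeing near $\partial D^{k+1} = S^k$ with the product collar $dt^2 + h$. This matches $h \oplus g_{\mathrm{torp}}$ over the gluing region, so the two pieces assemble to a smooth metric on $N$, and on the handle its scalar curvature is $\mathrm{scal}_{\bar h} + (m-1)(m-2)\rho^{-2} \ge -C + 2\rho^{-2}$ with $C$ independent of $\rho$, hence positive for $\rho$ small (chosen in the deformation step). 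Iterating over the given sequence of surgeries then yields the theorem.
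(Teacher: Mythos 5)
This theorem is quoted in the paper from \cite{SchoenYau:structure.psc, GromovLawson:classification} and is not reproved there, so there is no internal proof to compare against; your proposal is a faithful reconstruction of the standard Gromov--Lawson argument (normal form via Fermi coordinates, bending in $M \times \RR$, Gauss equation bookkeeping with the $(m-1)(m-2)r^{-2}$ term requiring codimension $\geq 3$, then capping with a torpedo and gluing the handle). You also correctly identify that the entire difficulty is concentrated in the choice of the bending curve $\gamma$ --- indeed the curve proposed in the original paper does not quite work, and the complete argument is the content of the carefully reworked versions of the bending lemma in the later literature --- so as a sketch this is accurate and honestly flags the one nontrivial estimate rather than eliding it.
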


Theorem \ref{theo:sy.gl.surgery} naturally leads one to ask:

\begin{quest} \label{quest:sy.gl.converse}
	Can all topologically PSC $n$-manifolds be built out of ``simple'' topologically PSC $n$-manifolds by performing codimension $\geq 3$ surgeries?
\end{quest} 

Our understanding of $3$-manifold topology implies a positive answer when $n=3$:
\begin{theo}[Schoen--Yau and Gromov--Lawson and Perelman] \label{theo:full.3d}
	Every closed, oriented, topologically PSC $3$-manifold can be obtained by performing $0$-surgeries on a disjoint union of spherical space forms (i.e., $\SS^3/\Gamma$'s, where the $\Gamma$'s are finite subgroups of $SO(4)$ acting freely on $\SS^3$).
\end{theo}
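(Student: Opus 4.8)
The plan is to combine two ingredients, one deep and one elementary: (i) the diffeomorphism classification of closed, oriented, topologically PSC $3$-manifolds, and (ii) the observation that the prime decompositions occurring in (i) are realized by $0$-surgeries on disjoint unions of spherical space forms. No new analysis is needed beyond citing (i).

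For (i), I would invoke the combined work of Schoen--Yau \cite{SchoenYau:structure.psc} and Gromov--Lawson \cite{GromovLawson:classification} on the topology of $3$-manifolds of positive scalar curvature --- via stable minimal hypersurfaces, respectively via enlargeability, ruling out aspherical prime summands --- together with Perelman's resolution of the Poincar\'e and elliptization conjectures, which identifies an irreducible closed oriented $3$-manifold with finite fundamental group as a standard spherical space form $\SS^3/\Gamma$ with $\Gamma \subset SO(4)$ acting freely. The upshot is that a closed, oriented, topologically PSC $3$-manifold $M$ is diffeomorphic to a connected sum
\[
	M \;\cong\; \big(\#_{i=1}^{a}\, \SS^3/\Gamma_i \big) \,\#\, \big(\#_{j=1}^{b}\, \SS^2 \times \SS^1 \big),
\]
for some $a, b \ge 0$ (an empty connected sum being $\SS^3$), each $\Gamma_i$ finite in $SO(4)$ and free on $\SS^3$. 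Alternatively, one can run Ricci flow with surgery from the given PSC metric: by Perelman's finite-time extinction for PSC metrics the process terminates while cutting $M$ along $2$-spheres into round shrinking space forms, which amounts to the same decomposition.

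For (ii), recall that a $0$-surgery on a closed oriented $3$-manifold $N$ removes two disjoint embedded $3$-balls and glues a tube $\SS^2 \times [0,1]$ onto the two resulting $2$-sphere boundaries (with framings chosen so that the outcome is again oriented). If the two balls lie in different components $N_1, N_2$ of $N$, the result is $N_1 \# N_2$ with the remaining components untouched; if they lie in one component $N_1$, the result is $N_1 \# (\SS^2 \times \SS^1)$, the relevant $\SS^2$-bundle over $\SS^1$ being the trivial one since the twisted one is non-orientable. Hence, starting from the disjoint union $N_0 := \bigsqcup_{i=1}^{\max(a,1)} \SS^3/\Gamma_i$ (with $\Gamma_1 := \{1\}$ when $a=0$), I would perform $\max(a,1)-1$ $0$-surgeries joining pairs of components to produce the connected manifold $\#_{i=1}^{a} \SS^3/\Gamma_i$ (connect-summing with $\SS^3$ changes nothing), then $b$ further $0$-surgeries, each with both feet in the current connected manifold, successively attaching the $\SS^2 \times \SS^1$ summands. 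The final manifold is diffeomorphic to $M$, proving the theorem. As a consistency check, unrelated to the argument: every manifold so produced is topologically PSC by Theorem \ref{theo:sy.gl.surgery}, since the $\SS^3/\Gamma_i$ carry round quotient metrics.

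The only real obstacle is ingredient (i): recognizing the irreducible summands of a PSC $3$-manifold as spherical space forms genuinely requires both the minimal-surface/enlargeability input of Schoen--Yau and Gromov--Lawson and the geometrization input of Perelman, and here these would be used entirely as black boxes. Step (ii) is purely combinatorial once (i) is granted, the only point requiring mild care being the orientation-compatible choice of surgery framings, which is why the twisted $\SS^2$-bundle over $\SS^1$ never appears.
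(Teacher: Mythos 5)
Your proof is correct, and it matches the argument the paper itself sketches (without proof, since Theorem~\ref{theo:full.3d} is stated as a known result) in the discussion following Corollary~\ref{coro:homotopy.3d}: Kneser--Milnor prime decomposition, exclusion of aspherical summands via Schoen--Yau/Gromov--Lawson, identification of the remaining irreducible summands as $\SS^3/\Gamma$ via Perelman's elliptization, and realization of connected sums and $\SS^2\times\SS^1$ summands by $0$-surgeries. No gaps; your care about orientation-compatible framings (ruling out the twisted $\SS^2$-bundle) is the one point that merits the explicit mention you gave it.
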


In this paper we prove:

\begin{theo} \label{theo:homology.4d}
	Every closed, oriented, topologically PSC $4$-manifold $M$ can be obtained from a possibly disconnected, closed, oriented, topologically PSC $4$-orbifold $M'$ with isolated singularities such that $b_1(M') = 0$ and $b_2(M') \leq b_2(M)$ by performing $0$- and $1$-surgeries. All $1$-surgeries are standard manifold ones, but $0$-surgeries may occur at orbifold points. 
\end{theo}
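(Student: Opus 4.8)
\emph{Plan.}
I would argue by induction on $b_1(M)=b_3(M)$. If $b_1(M)=0$ take $M'=M$. Assuming $b_1(M)>0$, the goal of one inductive step is to produce a closed, oriented, topologically PSC $4$-orbifold $\widehat M$ with isolated singularities, $b_1(\widehat M)=b_1(M)-1$ and $b_2(\widehat M)\le b_2(M)$, from which $M$ is recovered by a finite sequence of $0$-surgeries (some at orbifold points) and standard $1$-surgeries; then one iterates, the Step~1 construction below going through verbatim on the smooth locus of any such orbifold (after a routine argument keeping the slices off the isolated singular set), terminating at the desired $M'$.

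\emph{Step 1: a PSC slice.}
Since $b_3(M)>0$, minimizing area in a nonzero class of $H_3(M;\ZZ)$ on the smooth locus produces, by interior regularity for codimension-one minimizers, a smooth closed embedded two-sided nonseparating hypersurface $\Sigma^3\subset M$; passing to a homologically essential component, take $\Sigma$ connected. The Schoen--Yau conformal deformation—combine the stability inequality, the Gauss equation, and $R_M>0$ to deduce $\lambda_1(-8\Delta_\Sigma+R_\Sigma)>0$—shows $\Sigma$ is topologically PSC, so by Theorem~\ref{theo:full.3d} and $3$-manifold prime decomposition $\Sigma\cong\#_{i=1}^{j}(\SS^3/\Gamma_i)\,\#\,\#^{k}(\SS^2\times\SS^1)$. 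For later steps one also wants the PSC metric normalized near $\Sigma$ to a Riemannian product $dt^2+h$ with $h$ a PSC metric on $\Sigma$; this is where I expect one needs a well-chosen $\mu$-bubble, whose built-in warped structure makes such a normal form available, rather than a plain minimizer.

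\emph{Step 2: removing the $\SS^2\times\SS^1$ summands, then cutting and orbifold-capping.}
Each $\SS^2\times\SS^1$ summand is cut off by a nonseparating $2$-sphere $S\subset\Sigma$, which inside the product neighbourhood $\Sigma\times(-\eps,\eps)$ has trivial normal $\RR^2$-bundle; the $2$-surgery on $M$ along $S$ (done in this neighbourhood) induces the $2$-surgery on $\Sigma$ along $S$, deleting that summand, and its reverse is a standard $1$-surgery. The crucial point is that this \emph{codimension-two} surgery can be made to preserve topological PSC: after deforming $h$ near $S$ to a thin round neck $\SS^2(\rho)\times(-\delta,\delta)$, the scalar curvature $2/\rho^2$ dominates the bounded negative contribution of the Gromov--Lawson bending. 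After $k$ such steps we have a topologically PSC $4$-(orbi)fold $M_k$ with $b_2(M_k)\le b_2(M)$ and a connected nonseparating PSC hypersurface $\Sigma_k\cong\#_i(\SS^3/\Gamma_i)$, with $M$ recovered from $M_k$ by $k$ standard $1$-surgeries. Now cut $M_k$ along $\Sigma_k$ to obtain a connected $N$ with $\partial N=(\Sigma_k)_+\sqcup(\Sigma_k)_-$, let $W=\natural_i(D^4/\Gamma_i)$ (a compact PSC orbifold with isolated cone singularities, $b_1(W)=b_2(W)=0$, $\partial W=\#_i(\SS^3/\Gamma_i)$), and set $\widehat M:=N\cup_{(\Sigma_k)_+}W\cup_{(\Sigma_k)_-}\bar W$; with the collar metrics matched, $\widehat M$ is a closed oriented topologically PSC $4$-orbifold with isolated singularities. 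As the two caps are disjoint, Mayer--Vietoris gives $b_1(\widehat M)=b_1(M_k)-1$, and since each cone has Euler characteristic $1$ one gets $\chi(\widehat M)=\chi(M_k)+2$, hence $b_2(\widehat M)=b_2(M_k)\le b_2(M)$.

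\emph{Step 3: recovering $M$ from $\widehat M$ by surgeries, and the main obstacle.}
Each $\Gamma_i$ gives a cone point of $W$ and of $\bar W$ with oppositely oriented links $\SS^3/\Gamma_i$; the $0$-surgery at orbifold points joining them—delete small cones, insert $[0,1]\times(\SS^3/\Gamma_i)$—resolves both, and doing this for all $i$ yields a smooth $M''$ with $b_1(M'')=b_1(M_k)+(j-1)$, $b_2(M'')=b_2(M_k)$. Killing the $j-1$ extra $1$-cycles by $1$-surgeries along homologically essential embedded circles (each drops $b_1$ by one and leaves $b_2$ fixed, as $\chi$ rises by $2$) returns $M_k$, hence $M$. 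So $M$ is built from $\widehat M$ by $0$-surgeries (precisely the cone resolutions) at orbifold points and by standard $1$-surgeries, as required, and the induction proceeds. The hard part, I expect, is Step~2: capping the $\SS^2\times\SS^1$ summands directly (by $\SS^2\times D^2$) would raise $b_2$ by $2$ each and violate the bound, so they must be excised in codimension two, where Theorem~\ref{theo:sy.gl.surgery} gives nothing; making these surgeries manifestly PSC-preserving forces one to set up the slice so that a product neighbourhood with a thin round $\SS^2$-neck is present, and establishing that normal form—together with verifying that no $\SS^2\times\SS^1$ contribution is left that cannot be absorbed this way—is the technical core of the proof.
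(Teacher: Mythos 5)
Your outline is recognizably close to the paper's: slice along a stable minimal hypersurface, normalize the metric to a product form near it, remove $\SS^2\times\SS^1$ necks by 2-surgeries in the thin round tube, cap the remaining spherical space forms with orbifold cone pieces, and undo everything by 0- and 1-surgeries. The places where the technical work lies (the product normal form, the PSC $2$-surgery) are correctly identified. However, the global architecture is different, and that difference opens a genuine gap.

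\textbf{The gap: the induction on $b_1$ does not close.} You claim that one inductive step produces $\widehat M$ with $b_1(\widehat M)=b_1(M)-1$, but you only establish $b_1(\widehat M)=b_1(M_k)-1$, where $M_k$ is obtained from $M$ by the $k$ $2$-surgeries of Step~2, and you never compare $b_1(M_k)$ with $b_1(M)$. By Lemma~\ref{lemm:1.surg.b1.b2} (read in reverse), each such $2$-surgery satisfies exactly one of $b_1(M_k)=b_1(M_{k-1})+1,\ b_2(M_k)=b_2(M_{k-1})$, or $b_1(M_k)=b_1(M_{k-1}),\ b_2(M_k)=b_2(M_{k-1})-2$; which case occurs depends on whether the meridian circle of the neck sphere $S$ is homologically essential in the complement, equivalently on whether $[S]$ is torsion in $H_2(M;\ZZ)$. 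Nothing in your setup ($S$ a nonseparating $2$-sphere in the stable minimizer $\Sigma$) rules out the torsion case, so the $2$-surgeries may \emph{raise} $b_1$. Then $b_1(\widehat M)=b_1(M_k)-1\ge b_1(M)-1$ with possible strict inequality failing, and the induction has no termination guarantee. This is exactly what Lemma~\ref{lemm:homology.nd.stable} is designed to circumvent: one chooses at the outset a (generally \emph{multi-component}) stable minimal hypersurface $\Sigma$ so that $\breve M=M\setminus\Sigma$ is connected and $H_3(\partial\breve M;\ZZ)\to H_3(\breve M;\ZZ)$ is surjective; this surjectivity is preserved through the $2$-surgeries of Step~I, and then $b_1(M'')=0$ follows directly from the long exact sequence of $(\breve M,\partial\breve M)$ together with Lefschetz duality — no count of $b_1$ along the way is needed, and no induction across successively built orbifolds is performed.

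\textbf{Two secondary points.} First, because your induction passes through orbifolds, it needs an orbifold version of both the GMT slicing and the metric preparation lemma; the paper explicitly flags the orbifold extension of the theorem as ``outside the scope'' (Remark~\ref{rema:homology.4d.orbifold}) precisely because its one-pass construction on $M$ never requires minimizing inside an orbifold. Calling this ``routine'' undersells it. Second, you guess that the product normal form near $\Sigma$ comes from a $\mu$-bubble; the paper instead cuts along $\Sigma$, uses the flexibility of metrics on two-sided stable minimal hypersurfaces in the operator-positivity class $\sM(\Sigma)$ (Lemma~\ref{lemm:3d.metric.modification}, via \cite{LiMantoulidis:metrics.lambda1} and Lemma~\ref{lemm:collar}) to build a PSC cylinder interpolating from the induced metric to any target PSC metric, and reglues; no $\mu$-bubble is involved.
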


Recall that the $j$th Betti number $b_j(M')$ of the orbifold $M'$ is defined to be the $j$th Betti number of $M'$ viewed as a topological space; 
in our case, this is equivalent to the $j$th Betti number of the regular part $M'_{\text{reg}} \subset M'$. In the connected case, $b_1(M')$ is the same as the rank of the abelianization of the orbifold fundamental group $\pi_1^{\text{orb}}(M')$. Finally, a $0$-surgery occurring at two orbifold points both modeled on $\RR^4/\Gamma$ means that the corresponding connected sum operation is performed with a $\SS^3/\Gamma$ neck. 

\begin{rema} \label{rema:homology.4d.orbifold}
	Theorem \ref{theo:homology.4d} will continue to hold if $M$ is itself a $4$-orbifold rather than a $4$-manifold, but this is somewhat outside the scope of our current paper.
\end{rema}

Note that orbifolds are indeed sometimes necessary for surgery decompositions such as that of Theorem \ref{theo:homology.4d} in dimension $4$. 

\begin{theo} \label{theo:s3g.s1}
	Let $\SS^3/\Gamma$ be a lens space with $\Gamma$ a nontrivial finite cyclic subgroup of $SO(4)$. The $4$-manifold $M = (\SS^3/\Gamma) \times \SS^1$  \textit{cannot} be obtained by performing manifold $0$ and $1$-surgeries on a $4$-manifold $M'$ with $b_1(M') = 0$.
\end{theo}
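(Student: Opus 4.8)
We argue by contradiction. Suppose $M := (\SS^3/\Gamma)\times\SS^1$ is obtained from a closed oriented $4$-manifold $M'$ with $b_1(M')=0$ by a sequence of $0$- and $1$-surgeries. I would first turn this into a statement about cobordism. Encode the surgery sequence as a compact $5$-dimensional cobordism $W$ with $\partial W = M\sqcup M'$, obtained from $M'\times[0,1]$ by attaching $a$ one-handles (for the $0$-surgeries) and $b$ two-handles (for the $1$-surgeries) along $M'\times\{1\}$; passing to the component containing $M$, we may assume $W$ is connected with $M'$ having components $M'_1,\dots,M'_c$. Turning this handle decomposition upside down, $W$ is built from $M\times[0,1]$ by attaching only handles of index $3$ and $4$, which do not affect $\pi_1$; hence $\pi_1(M)\cong\pi_1(W)$. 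Reading $\pi_1(W)$ off from the other description gives
\[
\pi_1(M)\;\cong\;\bigl(\pi_1(M'_1)*\cdots*\pi_1(M'_c)\bigr)*F_s\;\big/\;\langle\!\langle g_1,\dots,g_b\rangle\!\rangle,
\]
with $F_s$ free of rank $s=a-c+1$ and $g_1,\dots,g_b$ the attaching circles of the two-handles. Tracking Euler characteristics through the surgeries gives $\chi(M)=\chi(M')-2a+2b$; since $\chi(M)=0$ and each $\chi(M'_j)=2+b_2(M'_j)\ge 2$, this yields $a-b=\tfrac12\chi(M')\ge c$, so $s=a-c+1\ge b+1$.

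Next I would pin down the $M'_j$ using the abelianization. Since $H_1(M;\mathbf Z)=\mathbf Z\oplus\mathbf Z/k$ is a quotient of $\bigl(\bigoplus_j H_1(M'_j;\mathbf Z)\bigr)\oplus\mathbf Z^s$ by $b$ elements and $\bigoplus_j H_1(M'_j;\mathbf Z)$ is finite, the free rank of the quotient is at least $s-b$, forcing $s-b\le 1$. Combined with $s\ge b+1$ this gives $s=b+1$ and $\sum_j b_2(M'_j)=0$: every component $M'_j$ is a rational homology $4$-sphere, in particular $\chi(M'_j)=2$.

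Now I would feed this into the five-term exact sequence in integral group homology for $1\to N\to \Pi*F_{b+1}\to\pi_1(M)\to 1$, where $\Pi=\pi_1(M'_1)*\cdots*\pi_1(M'_c)$ and $N=\langle\!\langle g_1,\dots,g_b\rangle\!\rangle$:
\[
\textstyle\bigoplus_j H_2(\pi_1(M'_j);\mathbf Z)\longrightarrow H_2(\pi_1(M);\mathbf Z)\longrightarrow (N^{\mathrm{ab}})_{\pi_1(M)}\longrightarrow \bigl(\bigoplus_j H_1(M'_j;\mathbf Z)\bigr)\oplus\mathbf Z^{b+1}\longrightarrow H_1(M;\mathbf Z)\to 0.
\]
Here $H_2(\pi_1(M);\mathbf Z)=H_2(\mathbf Z/k\times\mathbf Z;\mathbf Z)=\mathbf Z/k$, while $(N^{\mathrm{ab}})_{\pi_1(M)}$ is generated by at most $b$ elements (as $N$ is the normal closure of $b$ elements) and surjects onto the kernel of $\bigl(\bigoplus_j H_1(M'_j;\mathbf Z)\bigr)\oplus\mathbf Z^{b+1}\to\mathbf Z\oplus\mathbf Z/k$, which has free rank $b$. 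A group generated by $\le b$ elements surjecting onto something of rank $b$ must itself be free of rank $b$, so that kernel is $\cong\mathbf Z^b$ and the connecting map $H_2(\pi_1(M);\mathbf Z)\to(N^{\mathrm{ab}})_{\pi_1(M)}\cong\mathbf Z^b$ is zero (its source is finite). Exactness then makes $\bigoplus_j H_2(\pi_1(M'_j);\mathbf Z)\to\mathbf Z/k$ surjective.

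Finally I would derive the contradiction from a Hausmann–Weinberger-type inequality: for a closed oriented $4$-manifold $X$ with $b_1(X)=0$ one has $\chi(X)\ge 2-2\operatorname{def}(\pi_1(X))$. Applied to the rational homology spheres $M'_j$ (which have $\chi=2$) this forces $\operatorname{def}(\pi_1(M'_j))\ge 0$; since always $\operatorname{def}(\pi)\le b_1(\pi)-d(H_2(\pi;\mathbf Z))$ and $b_1(\pi_1(M'_j))=0$, we conclude $H_2(\pi_1(M'_j);\mathbf Z)=0$ for every $j$, contradicting surjectivity onto $\mathbf Z/k$ with $k=|\Gamma|\ge 2$. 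The main obstacle is exactly this last input: the naive bound $\chi\ge 2$ is not enough, because a priori $\pi_1$ of a rational homology $4$-sphere could have negative deficiency (and correspondingly nonzero $H_2$), so one genuinely needs the sharp Euler-characteristic estimate of Hausmann–Weinberger in the $b_1=0$ case — or, equivalently, a direct proof that a closed oriented $4$-manifold with $b_1=b_2=0$ has $H_2$ of its fundamental group trivial.
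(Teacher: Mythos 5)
Your approach is genuinely different from the paper's, and it is correct and well-organized up through the five-term exact sequence, but the final step has a real gap that, as you yourself flag, is ``the main obstacle.''

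What you do differently: you encode the surgery sequence as a $5$-dimensional cobordism, track $\chi$ and $H_1$ through handle counts to force all components $M'_j$ to be rational homology $4$-spheres with $s=b+1$, and then run the five-term exact sequence in group homology to conclude that $\bigoplus_j H_2(\pi_1(M'_j);\ZZ)\twoheadrightarrow H_2(\pi_1(M);\ZZ)=\ZZ/k$. All of this is correct and is a nice, more structural take on the problem. The paper instead immediately combines all $0$-surgeries (connected sums plus $\SS^3\times\SS^1$'s) and all $1$-surgeries, reads off a presentation $\Gamma\times\ZZ\cong\langle G*\ZZ^{*k}\mid r_1,\dots,r_{k-1}\rangle$ with $G$ of torsion abelianization, and then runs a purely group-theoretic reduction: it shows $G$ embeds in $\Gamma\times\ZZ$ (hence is abelian, finite, and so cyclic, say $\ZZ_m$), turns the presentation into a zero-deficiency presentation of $\Gamma\times\ZZ_n$ for every $n$, and contradicts this with the known Schur multiplier $M(\ZZ_n\times\ZZ_n)=\ZZ_n\ne\{1\}$ when $n=|\Gamma|$. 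The point is that the paper never needs to know anything about $H_2$ of the individual $\pi_1(M'_j)$; it sidesteps your final step entirely.

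The gap: your concluding contradiction requires $H_2(\pi_1(M'_j);\ZZ)=0$ for every rational homology $4$-sphere $M'_j$, which you deduce from the inequality $\chi(X)\geq 2-2\operatorname{def}(\pi_1(X))$ for closed oriented $4$-manifolds $X$ with $b_1(X)=0$. That is not the Hausmann--Weinberger inequality. What Hausmann--Weinberger actually prove is a lower bound in terms of rational group Betti numbers, $\chi(X)\geq 2-2b_1(\pi_1(X);\QQ)+b_2(\pi_1(X);\QQ)$; with $\chi(X)=2$ and $b_1=0$ this only gives $H_2(\pi_1(X);\QQ)=0$, i.e.\ that $H_2(\pi_1(X);\ZZ)$ is torsion, which does nothing to obstruct a surjection onto $\ZZ/k$. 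Your stronger deficiency inequality would, via $\operatorname{def}(\pi)\leq b_1(\pi)-d(H_2(\pi;\ZZ))$, force $H_2(\pi_1(X);\ZZ)=0$ for every rational homology $4$-sphere and simultaneously pin down the Hausmann--Weinberger invariant $q(\pi)=2-2\operatorname{def}(\pi)$ for all finitely presented $\pi$ with $b_1(\pi;\QQ)=0$; neither of these is a theorem (the second is known to be a hard open computation in general), and I do not believe the first is true. So either you need to supply a proof that a closed oriented $4$-manifold with $b_1=b_2=0$ has $H_2(\pi_1;\ZZ)=0$ --- which would be a nontrivial result on its own --- or you need to replace the last step by something that, like the paper's Schur-multiplier/zero-deficiency argument, avoids needing per-component vanishing of $H_2(\pi_1(M'_j);\ZZ)$.
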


Let us outline the proof of Theorem \ref{theo:homology.4d}. Endow $M$ with an arbitrary PSC metric. We ``exhaust'' the codimension-$1$ homology of $M$ with a two-sided, stable minimal hypersurface $\Sigma$ (Lemma \ref{lemm:homology.nd.stable}). By a now-standard argument of Schoen--Yau, the metric induced on $\Sigma$ is conformal to a PSC metric. Thus, $\Sigma$ is topologically the result of $0$-surgeries on spherical space forms (Theorem \ref{theo:full.3d}). We then show how to locally modify the metric on $M$ to another PSC metric that is locally a product near $\Sigma$ and induces a ``model'' PSC metric on $\Sigma$ (Lemma \ref{lemm:3d.metric.modification}, Definition \ref{defi:3d.model.metric}). If $\Sigma$ is merely the disjoint union of spherical space forms $\SS^3/\Gamma$, with no $0$-surgeries, then our model metrics are all round and simple $3$-surgeries on $M$ along the components of $\Sigma$ yield a $4$-orbifold whose $b_2$ is unchanged and $b_1$ is trivial (because $\Sigma$ suitably exhausted the codimension-$1$ homology of $M$). If $\Sigma$ does involve $0$-surgeries, we first undo these using $2$-surgeries on $M$ near $\Sigma$'s $0$-surgery neck regions; this may decrease $b_2$. We have only performed $3$- and $2$-surgeries on $M$ to get to the orbifold, so $M$ can be obtained from the orbifold via $0$- and $1$-surgeries, respectively. Note that our construction preserves the spin condition; see Remark \ref{rema:spin.4d.conclusion}.

We conclude our introduction by posing the following:
\begin{quest}
	Let $M$ be a closed, oriented, topologically PSC $4$-manifold. Can one obtain $M$ from a closed, oriented, topologically PSC $4$-orbifold $M'$ with isolated singularities and the property that each component has finite orbifold fundamental group $\pi_1^{\text{orb}}(M')$ by performing $0$- and $1$-surgeries?
\end{quest}

\textit{Organization of the paper}. We first illustrate our homological decomposition argument for ambient $3$-manifolds in Section \ref{sec:homology.3d}, where the picture is much simpler. Section \ref{sec:homotopy.3d} is a brief digression offering a homotopical refinement of the $3$-dimensional decomposition. In Section \ref{sec:homology.4d}, we give the proof of our main theorem. In Section \ref{sec:s3g.s1} we give the proof of Theorem \ref{theo:s3g.s1}. Finally, our appendix contains the proofs of some technical lemmas.

\medskip

\textit{Acknowledgments}. R.B. was partially supported by NSF grant DMS-1906500. C.L. was partially supported by NSF grant DMS-2202343. C.M. was partially supported by NSF Grant DMS-2147521. The authors would like to thank the anonymous referee for their suggestions and Otis Chodosh for some helpful conversations.

\medskip

\section{Warmup: homology decomposition in 3D} \label{sec:homology.3d}

We first illustrate our homological decomposition argument on 3D manifolds, where it yields the following weaker analog of Theorem \ref{theo:full.3d}. (See, however, Theorem \ref{theo:homotopy.3d} and its subsequent discussion.)

\begin{theo} \label{theo:homology.3d}
	Every closed, oriented, topologically PSC $3$-manifold $M$ can be obtained from a closed, oriented, topologically PSC $3$-manifold $M'$ with $b_1(M') = 0$ by performing $0$-surgeries.
\end{theo}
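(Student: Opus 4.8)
The plan is to carry out the argument outlined in the introduction in its simplest form, $n=3$, where the cross-section is a surface. Fix an arbitrary PSC metric $g$ on $M$ and apply Lemma \ref{lemm:homology.nd.stable} to produce a closed, embedded, two-sided, stable minimal hypersurface $\Sigma \subset (M,g)$ that ``exhausts'' the codimension-$1$ homology of $M$; the feature I will actually use is that cutting $M$ open along $\Sigma$ yields a compact manifold $N$ (with $2k$ spherical boundary components, where $k$ is the number of components of $\Sigma$) satisfying $b_1(N) = 0$. Since each component $\Sigma_i$ of $\Sigma$ is a closed, orientable, stable minimal surface in a $3$-manifold of positive scalar curvature, the stability inequality tested against the constant function $1$, combined with the Gauss equation and Gauss--Bonnet, forces $\int_{\Sigma_i} K_{\Sigma_i} > 0$ and hence $\chi(\Sigma_i) = 2$; so every component of $\Sigma$ is a $2$-sphere. (This is the simplest instance of the Schoen--Yau descent argument; notably, no ``$0$-surgery'' phenomenon among the pieces of $\Sigma$ can arise here, unlike in dimension $4$, since the model for a PSC surface is merely a disjoint union of round spheres.)

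Next I would apply Lemma \ref{lemm:3d.metric.modification} together with Definition \ref{defi:3d.model.metric} to deform $g$, within a small tubular neighborhood of $\Sigma$, to a PSC metric that is an isometric product $\Sigma \times (-\eps,\eps)$ near $\Sigma$ and restricts on each $\Sigma_i \times \{0\}$ to the round metric of some small radius $r$. Cutting $M$ along $\Sigma$ then produces a compact PSC manifold $N$ whose $2k$ boundary spheres are round of radius $r$ and carry round product collars. A round cylinder $\SS^2(r) \times (-\eps,\eps)$ caps off with a positively curved ``torpedo'' $3$-ball matching the collar, so attaching such a ball to each of the $2k$ boundary spheres produces a closed, oriented $3$-manifold $M'$ carrying a PSC metric.

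It remains to verify two bookkeeping claims. First, $M$ is obtained from $M'$ by $k$ successive $0$-surgeries: undoing the cut-and-cap along $\Sigma_i$ amounts to deleting the interiors of the two $3$-balls that capped the two copies of $\Sigma_i$ --- an embedded $\SS^0 \times D^3 \subset M'$ --- and gluing in $D^1 \times \SS^2 \cong \Sigma_i \times [-1,1]$, which is precisely a $0$-surgery; as the components of $\Sigma$ are disjoint, these $k$ surgeries act on disjoint regions and together recover $M$. Second, $b_1(M') = 0$: attaching a $3$-ball to a $2$-sphere boundary component leaves $H_1$ unchanged (since $H_j(D^3,\SS^2) = 0$ for $j \le 2$), so $b_1(M') = b_1(N) = 0$ by the choice of $\Sigma$. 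This completes the argument modulo the two cited lemmas.

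The step I expect to demand genuine work --- everything else being homological bookkeeping or standard Schoen--Yau-type curvature estimates --- is the localized metric modification of Lemma \ref{lemm:3d.metric.modification}: confining the deformation to a neighborhood of $\Sigma$ while preserving positivity of the scalar curvature and installing the exact round-cylinder normal form that makes the capping step immediate. This is cleaner in dimension $3$ than in general precisely because the model metric on the cross-section is just a round sphere, so no singular (orbifold) caps are needed.
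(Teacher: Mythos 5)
Your proposal is correct and follows essentially the same route as the paper: exhaust the codimension-one homology with the stable minimal surface from Lemma \ref{lemm:homology.nd.stable}, use stability plus Gauss--Bonnet to see the components are spheres, deform to a round product collar, and cap off the cut manifold with PSC balls, so that the inverse operations are $0$-surgeries. Two small points: the metric preparation step in dimension $3$ is Lemma \ref{lemm:2d.metric.modification}, not Lemma \ref{lemm:3d.metric.modification} (and Definition \ref{defi:3d.model.metric} is not needed, since the model for a PSC surface is just a round sphere, as you note); and the claim $b_1(N)=0$ is not literally the conclusion of Lemma \ref{lemm:homology.nd.stable} --- the lemma gives surjectivity of $H_2(\partial N;\ZZ)\to H_2(N;\ZZ)$, from which one deduces via the long exact sequence of the pair that $H_2(N,\partial N;\ZZ)$ injects into $H_1(\partial N;\ZZ)=0$ (using that $\partial N$ consists of spheres) and then concludes $H^1(N;\ZZ)=0$ by Lefschetz duality; this short duality argument is the one place the surjectivity hypothesis is actually consumed and should be spelled out.
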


\begin{rema}[The idea in 3D] \label{rema:idea.3d}
We want to capture $H_2(M; \ZZ)$ by $2$-spheres on which we can perform $2$-surgeries within the PSC category. Of course, $2$-surgeries in a $3$-manifold do not a priori preserve the PSC condition (they are not of codimension $\geq 3$), but we will see that $2$-surgeries performed on stable minimal spheres do. Indeed, we show how to reduce a  $2$-surgery on any stable minimal sphere to the surgically trivial case of $2$-surgery on a round, totally geodesic sphere. There, the PSC condition is obviously preserved under the surgery. 
\end{rema}

Lemma \ref{lemm:2d.metric.modification} is the preparation lemma underlying our PSC $2$-surgeries. We defer its proof to Appendix \ref{sec:technical.lemmas}.

\begin{lemm}[Metric preparation lemma] \label{lemm:2d.metric.modification}
	Let $\Sigma$ be a closed, embedded, two-sided, stable minimal surface in an oriented, PSC $3$-manifold $(M, g)$. Then:
	\begin{enumerate}
		\item[(a)] Each component of $\Sigma$ is an $\SS^2$.
		\item[(b)] Given any auxiliary PSC metric $\varrho$ on $\Sigma$, there exists a new PSC metric $\tilde g$ on $M$, which:
			\begin{itemize}
				\item is isometric to a product cylinder $(\Sigma, \varrho) \times (-2, 2)$ in the distance-$2$ tubular neighborhood of $\Sigma$, and
				\item coincides with $g$ outside a larger tubular neighborhood of $\Sigma$.
			\end{itemize}
	\end{enumerate}
\end{lemm}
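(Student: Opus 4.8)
The plan is to separate the purely topological statement (a) from the metric construction (b).

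\textit{Part (a).} It suffices to treat one component of $\Sigma$ at a time, since each component is again two-sided and stable minimal. Feed the constant $f\equiv 1$ into the stability inequality $\int_\Sigma\bigl(|\nabla f|^2-(|A|^2+\Ric_g(\nu,\nu))f^2\bigr)\geq 0$ and use the Gauss equation together with $H\equiv 0$ to rewrite $|A|^2+\Ric_g(\nu,\nu)=\tfrac12|A|^2+\tfrac12 R_g-\tfrac12 R_\Sigma$; since $R_g>0$ this gives $\int_\Sigma R_\Sigma>0$, hence $\chi(\Sigma)>0$ by Gauss--Bonnet. As $\Sigma$ is two-sided in an oriented manifold it is orientable, so each component is an $\SS^2$.

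\textit{Part (b), setup.} Work in Fermi coordinates, so $g=dt^2+g_t$ on a tubular neighborhood $\Sigma\times(-\delta,\delta)$ with $g_0=g|_\Sigma$ and $\partial_tg_t|_{t=0}$ trace-free ($\Sigma$ being minimal). The workhorse is the identity $R_{ds^2+h_s}=R_{h_s}-|k_s|_{h_s}^2-(\tr_{h_s}k_s)^2-2\,\partial_s(\tr_{h_s}k_s)$ for metrics of product type, where $k_s:=\tfrac12\partial_sh_s$. Let $u>0$ be a first eigenfunction of the Jacobi operator $-\Delta_\Sigma-(|A|^2+\Ric_g(\nu,\nu))$; stability gives $\lambda_1\geq 0$, and combining $-\Delta_\Sigma u=(|A|^2+\Ric_g(\nu,\nu)+\lambda_1)u$ with the Gauss equation yields $-2\Delta_\Sigma u+R_\Sigma u\geq(|A|^2+R_g)u>0$, i.e.\ the conformal metric $\gamma:=u^2\,g|_\Sigma$ on $\Sigma$ has positive scalar curvature (which reproves (a)).

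\textit{Part (b), construction.} I would build $\tilde g$ in three stages. \textbf{Stage 1 (local product-ification):} using $u$, extended into the tubular neighborhood and combined with a sufficiently concave profile in the $t$-direction (equivalently, a strongly concave warping of the fibers), deform $g$ within the tubular neighborhood to a PSC metric $g_1$ that agrees with $g$ near the boundary and equals a genuine product $ds^2+\gamma_1$, with $\gamma_1$ a PSC metric on $\Sigma$, on some (possibly thin) neighborhood of $\Sigma$. \textbf{Stage 2 (elongation):} since $g_1$ is an honest product near $\Sigma$ and the tubular neighborhood is diffeomorphic to $\Sigma\times\RR$, one may splice in an arbitrarily long product cylinder $ds^2+\gamma_1$, getting a PSC metric $g_2$, still equal to $g$ outside a larger tubular neighborhood, that equals $ds^2+\gamma_1$ on $\Sigma\times(-L,L)$ with $L$ as large as desired. \textbf{Stage 3 (fiber interpolation):} by (a), $\Sigma$ is a union of $2$-spheres and the space of PSC metrics on $\SS^2$ is path-connected, so there is a smooth path $\{\gamma_\sigma\}_{\sigma\in[0,1]}$ of PSC metrics from $\gamma_1$ to $\varrho$; set $\tilde g:=ds^2+\gamma_{\chi(|s|)}$ where $\chi\equiv 1$ on $[0,2]$, $\chi\equiv 0$ on $[2+L,\infty)$, and in between is a fixed profile rescaled over the length-$L$ window. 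In the workhorse identity the $k_s$-terms are then $O(1/L)$ while $R_{\gamma_\sigma}$ stays bounded below by a positive constant along the compact path, so $\tilde g$ is PSC for $L$ large; it equals $ds^2+\varrho$ on $\{|s|<2\}$ (which is exactly the distance-$2$ tubular neighborhood of $\Sigma$ for $\tilde g$) and equals $g$ outside a larger neighborhood, as required.

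\textit{Main obstacle.} Stage 1 is the crux, and the point at which stability must be used essentially rather than only for (a): a codimension-$1$ modification near $\Sigma$ does not preserve $R>0$ for free, the offending term being $-2\,\partial_s(\tr k_s)$, which records the focusing of the level sets and can have either sign. Replacing the generically non-PSC induced metrics $g_t$ on nearby level sets by a product fiber costs a scalar-curvature contribution $\sim -R_{g_t}$, which may be large; the eigenfunction $u$ (through $\lambda_1\geq 0$) supplies exactly the conformal correction needed on $\Sigma$ itself, and a sufficiently concave $t$-profile — legitimate on a short $t$-interval — absorbs the curvature of the level sets, after which positivity becomes an open condition and the remaining interpolations and the gluing back to $g$ go through. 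Pinning down these profiles and checking the scalar-curvature inequality across the several transitions is the technical content I would expect to occupy most of the appendix.
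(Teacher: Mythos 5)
Your part (a) is correct; it is the standard Schoen--Yau stability-plus-Gauss--Bonnet argument. In part (b), Stages 2 and 3 are sound and essentially reproduce the paper's collar construction (Lemma \ref{lemm:collar}): over a cylinder of length $L$ whose fiber metric traverses a fixed compact path with a uniform positive lower bound on the relevant curvature quantity, the $s$-derivative terms in $R_{ds^2+\gamma_{\chi(s)}}$ are $O(1/L)$ and positivity survives. The genuine gap is Stage 1, which you correctly identify as the crux and then do not prove. Producing a PSC metric that agrees with $g$ outside a neighborhood of $\Sigma$ and is an exact product with PSC fiber near $\Sigma$ \emph{is} the nontrivial content of the lemma; the sentence ``a sufficiently concave profile \dots absorbs the curvature of the level sets, after which positivity becomes an open condition'' is an assertion of the conclusion, not an argument. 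In particular, the induced metrics on nearby level sets are in general not PSC, the conformal factor $u$ only helps on $\Sigma$ itself (where $\lambda_1\geq 0$ is available), and the transition regions where the concave bending must be turned on and off are exactly where the sign of $-2\,\partial_s(\operatorname{tr}k_s)$ works against you. As written, Stage 1 is a restatement of the deformation results of Li--Mantoulidis that the paper cites, not a proof of them.

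The paper sidesteps Stage 1 by a different mechanism, which you could adopt to close the gap: cut $M$ along $\Sigma$, producing two boundary components carrying the induced metric $\sigma_0$ itself, and glue in a warped collar $\sigma_t+(A\tilde u)^2\,dt^2$ whose initial fiber is $\sigma_0$ --- not a PSC deformation of it. The collar lemma operates with paths in $\sM(\Sigma)$, i.e.\ metrics for which $-\Delta_\sigma+\tfrac12 R_\sigma$ has positive principal eigenvalue: stability guarantees $\sigma_0\in\sM(\Sigma)$ even though $\sigma_0$ need not be PSC, path-connectedness of $\sM(\SS^2)$ supplies the path from $\sigma_0$ to $\varrho$, and the principal eigenfunction furnishes the warping factor, with $A\gg 1$ playing the role of your length $L$. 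The price is that the glued metric is only Lipschitz across the interfaces; since both sides are minimal there, the corner can be smoothed while preserving PSC by Miao's technique. This reorganization removes the need for any Fermi-coordinate product-ification of the ambient metric, which is precisely the step your outline leaves unproved.
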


\begin{rema} \label{rema:2d.metric.modification.onesided}
	If $\Sigma$ has one-sided components, then the same is true for them, except they are $\RR \PP^2$'s rather than $\SS^2$'s, and around them each product cylinder $(\SS^2, \varrho) \times (-2, 2)$ is to be taken modulo the $\ZZ_2$ action $(x, t) \mapsto (-x, -t)$; here, the auxiliary metric $\varrho$ must be a lift from a metric on $\RR \PP^2$ to a stable two-sided $\SS^2$ covering.
\end{rema}

The lemma below delivers the surface $\Sigma$ on which Lemma \ref{lemm:2d.metric.modification} is applied. We state it in $n$-dimensional generality, because it is indeed very general, and we will it need again in Section \ref{sec:homology.4d}. We defer its proof to Appendix \ref{sec:technical.lemmas}, and note that it can be viewed as a generalization of the ``slicing'' procedure in \cite[Lemma 19]{ChodoshLi:aspherical}.

\begin{lemm}[Minimal (hyper)surface preparation lemma] \label{lemm:homology.nd.stable}
	Let $(M, g)$ be a closed, connected, oriented $n$-manifold, with $n \leq 7$. There exists a closed, embedded, two-sided, stable minimal hypersurface $\Sigma \subset M$ such that $\breve M = M \setminus \Sigma$ is connected and the map $H_{n-1}(\partial \breve M; \ZZ) \twoheadrightarrow H_{n-1}(\breve M; \ZZ)$ is surjective.
\end{lemm}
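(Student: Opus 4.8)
The plan is to produce $\Sigma$ as a finite disjoint union $\Sigma_1\sqcup\dots\sqcup\Sigma_K$ of connected, two-sided, stable minimal hypersurfaces, found one at a time by minimizing area inside the manifold obtained by cutting $M$ open along the pieces found so far. First I would reduce the homological conclusion to a rank condition on $\breve M$ alone. By Lefschetz duality $H_{n-1}(\breve M,\partial\breve M;\ZZ)\cong H^1(\breve M;\ZZ)$ is torsion-free, and in the long exact sequence of the pair $(\breve M,\partial\breve M)$ the kernel of $j_*\colon H_{n-1}(\breve M;\ZZ)\to H_{n-1}(\breve M,\partial\breve M;\ZZ)$ is the subgroup $\Gamma(\breve M)$ generated by the fundamental classes of the components of $\partial\breve M$. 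Hence surjectivity of $H_{n-1}(\partial\breve M;\ZZ)\to H_{n-1}(\breve M;\ZZ)$, i.e.\ $j_*=0$, holds iff $\Gamma(\breve M)=H_{n-1}(\breve M;\ZZ)$, and since the target of $j_*$ is torsion-free this holds as soon as $\mathrm{rank}\,\Gamma(\breve M)=\mathrm{rank}\,H_{n-1}(\breve M;\ZZ)$. So it is enough to find disjoint connected two-sided stable minimal hypersurfaces $\Sigma_1,\dots,\Sigma_K\subset M$ with $M\setminus\bigcup_i\Sigma_i$ connected such that, in $M_K:=M$ cut along $\bigcup_i\Sigma_i$, the boundary fundamental classes span a finite-index subgroup of $H_{n-1}(M_K;\ZZ)$ (for $K=0$ this is just $b_1(M)=0$).

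I would construct the $\Sigma_i$ inductively, setting $M_0=M$ and at stage $i$ working in the connected manifold-with-boundary $M_{i-1}$ ($=M$ cut along $\Sigma_1,\dots,\Sigma_{i-1}$), whose boundary is a disjoint union of minimal hypersurfaces. Let $\Gamma_{i-1}\subset H_{n-1}(M_{i-1};\ZZ)$ be the span of its boundary fundamental classes. If $\mathrm{rank}(H_{n-1}(M_{i-1};\ZZ)/\Gamma_{i-1})=0$, stop. Otherwise I first pick a class $\alpha\in H_{n-1}(M_{i-1};\ZZ)$ that is primitive in a direct-sum complement to the saturation of $\Gamma_{i-1}$; under Poincar\'e--Lefschetz duality $\alpha$ corresponds to a class in $H^1(M_{i-1},\partial M_{i-1};\ZZ)$, hence to a smooth map $(M_{i-1},\partial M_{i-1})\to(\SS^1,\ast)$, and a regular level set over a point $\neq\ast$ is a closed, two-sided, embedded hypersurface in $\mathrm{int}(M_{i-1})$ representing $\alpha$. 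Minimizing mass in the integral homology class $\alpha$ (Federer--Fleming compactness and lower semicontinuity) produces an area-minimizing integral cycle $T$; since $n\le 7$, $\spt T$ is a smooth embedded minimal hypersurface in $\mathrm{int}(M_{i-1})$, while along $\partial M_{i-1}$ -- which is only minimal, hence a \emph{weak} barrier -- the (boundary) strong maximum principle forces $\spt T$ to contain, with locally constant multiplicity, every boundary component it meets. Subtracting those components off gives a new area-minimizer $T'$ in a class $\alpha'\equiv\alpha\pmod{\Gamma_{i-1}}$ with $\spt T'\subset\mathrm{int}(M_{i-1})$. Decomposing $T'$ into connected components of its support and using that $\alpha'$ is still primitive in a complement of the saturation of $\Gamma_{i-1}$ -- in particular its mod-$2$ reduction has nonzero projection off the mod-$2$ span of $\Gamma_{i-1}$ -- one component $\Sigma_i$ can be chosen to be a connected, two-sided, stable (being itself mass-minimizing in its homology class) minimal hypersurface whose class lies neither in the saturation of $\Gamma_{i-1}$ nor, mod $2$, in the mod-$2$ span of $\Gamma_{i-1}$. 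Set $M_i:=M_{i-1}$ cut along $\Sigma_i$.

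It remains to see the construction stays within hypothesis, terminates, and proves the reduced statement. Since $\Sigma_i$ is connected with mod-$2$ class outside the mod-$2$ span of $\partial M_{i-1}$, it is non-separating in $M_{i-1}$, so $M_i$ is connected with again minimal boundary, and $\Sigma_i$ is disjoint from $\Sigma_1,\dots,\Sigma_{i-1}$ back in $M$. For termination, a Mayer--Vietoris computation for the gluing $M_{i-1}=M_i/(\Sigma_i^+\!\sim\Sigma_i^-)$ -- bounding $\mathrm{rank}\,H_{n-1}(M_i;\ZZ)\le\mathrm{rank}\,H_{n-1}(M_{i-1};\ZZ)+1$ while showing the new boundary classes $[\Sigma_i^\pm]$ enlarge the rank of the boundary span by at least $2$, using that $[\Sigma_i]$ avoided the saturation of $\Gamma_{i-1}$ -- gives $\mathrm{rank}(H_{n-1}(M_i;\ZZ)/\Gamma_i)<\mathrm{rank}(H_{n-1}(M_{i-1};\ZZ)/\Gamma_{i-1})$, so the process halts at some $M_K$ with $\mathrm{rank}(H_{n-1}(M_K;\ZZ)/\Gamma_K)=0$; since $M_K=\breve M$ is connected, the first-paragraph reduction concludes. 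The step I expect to be the main obstacle is the geometric one: cutting along a minimal hypersurface creates new boundary that is minimal rather than strictly mean-convex, so an area-minimizer in a prescribed interior homology class need not avoid it, and one must invoke the strong maximum principle to peel those components off -- without disturbing the primitive/mod-$2$ control on the homology class that is exactly what keeps $M_i$ connected and forces the rank to drop. The purely topological inputs (existence of the prescribed class, and the Mayer--Vietoris rank estimate) are routine by comparison.
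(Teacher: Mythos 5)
Your overall strategy is the same as the paper's: inductively cut $M$ along connected, two-sided area-minimizers representing classes outside the span of the boundary classes, and terminate by showing the rank of the cokernel $\Gamma_k = H_{n-1}(\breve M_k;\ZZ)/\langle\text{boundary classes}\rangle$ strictly decreases. Your geometric discussion (barrier/strong maximum principle at the minimal boundary, peeling off boundary components, passing to a connected component of the support) is a reasonable elaboration of the GMT step that the paper handles by citation. However, your termination argument contains a false claim. You assert that the new boundary classes $[\Sigma_i^{\pm}]$ enlarge the rank of the boundary span by at least $2$. They cannot: in the compact oriented manifold-with-boundary $M_i$ one has $\partial[M_i,\partial M_i]=[\partial M_i]$, so the sum of \emph{all} boundary fundamental classes (with boundary orientations) vanishes in $H_{n-1}(M_i;\ZZ)$, i.e.
\[
[\Sigma_i^+]+[\Sigma_i^-]=-\textstyle\sum_{\text{old}}[\partial_j M_i].
\]
Hence $[\Sigma_i^-]$ is redundant and the boundary span grows by at most $1$ over the span of the old classes. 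Already at the first step this kills the bookkeeping: $M_0=M$ is closed, $\Gamma_0=0$, and $M_1$ has exactly two boundary components with $[\Sigma_1^+]=-[\Sigma_1^-]$, so $\rank\Gamma_1=1$, not $2$; combined with your (correct) bound $\rank H_{n-1}(M_1;\ZZ)\le\rank H_{n-1}(M_0;\ZZ)+1$ you only get that the cokernel rank is non-increasing, which does not terminate the induction.

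The repair is the paper's argument, which avoids counting ranks of the two groups separately: the Mayer--Vietoris sequence for $\breve M_k=\breve M_{k+1}\cup(\Sigma_{k+1}\times(-1,1))$ shows that the kernel of $H_{n-1}(\breve M_{k+1};\ZZ)\to H_{n-1}(\breve M_k;\ZZ)$ is contained in the boundary span of $\breve M_{k+1}$, so the cokernel $\Gamma_{k+1}$ injects into $H_{n-1}(\breve M_k;\ZZ)/(\Gamma_k'+\ZZ[\Sigma_{k+1}])$ where $\Gamma_k'$ is the old boundary span; since $[\Sigma_{k+1}]$ is nonzero in the (torsion-free, by Lefschetz duality and universal coefficients) quotient $\Gamma_k$, the rank drops strictly. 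Two smaller remarks. First, your requirement that a single component of the minimizer avoid \emph{both} the saturation of $\Gamma_{i-1}$ and its mod-$2$ span need not be satisfiable (and ``outside the saturation'' does not imply ``outside the mod-$2$ span'': take $H=\ZZ^2$, $\Gamma=\ZZ(1,1)$, $\alpha=(1,-1)$); but it is also unnecessary, because a separating $\Sigma_i$ would bound one side of $M_{i-1}$ together with some old boundary components and hence satisfy $[\Sigma_i]\in\Gamma_{i-1}$ integrally, so any component whose class avoids the integral span is automatically non-separating. Second, your reduction of surjectivity to a rank condition via torsion-freeness of $H_{n-1}(\breve M,\partial\breve M;\ZZ)\cong H^1(\breve M;\ZZ)$ is correct and is exactly the observation the paper uses.
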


\begin{proof}[Proof of Theorem \ref{theo:homology.3d}]
	Without loss of generality, $M$ is connected. Endow $M$ with a PSC metric $g$. Let $\Sigma$ be as in Lemma \ref{lemm:homology.nd.stable}, with components $\{ \Sigma_i \}_{i=1}^k$. By Lemma \ref{lemm:2d.metric.modification} and the two-sidedness of each component $\Sigma_i$, and we can modify the metric $g$ near $\Sigma_i$ and arrange for a new PSC metric $\tilde g$ on $M$ inducing a product metric isometric to $(\SS^2, \varrho_{\SS^2}) \times (-2, 2)$ on the distance-$2$ tubular neighborhood of $\Sigma_i$, where $\varrho_{\SS^2}$ is the standard round metric. 
	
	Now excise the distance-$1$ tubular neighborhood $U_i$ of each $\Sigma_i$ and cap off the two newly formed boundary components with a pair of PSC $3$-balls $\{ (B^\alpha_i, \eta^\alpha_i) \}_{\alpha=1}^2$ whose metric has been suitably deformed near the boundary to match smoothly with a product cylinder over $(\SS^2, \varrho_{\SS^2})$. Then,
	\[ (M', g') := (M \setminus \cup_{i=1}^k U_i, \tilde g) \cup \big( \cup_{i=1}^k \cup_{\alpha=1,2} (B_i^\alpha, \eta^\alpha_i) \big) \]
	with the obvious boundary identifications is a PSC $3$-manifold obtained from $M$ by performing a $2$-surgery for each $\Sigma_i$. Thus, $M$ can be obtained from $M'$ by performing $0$-surgeries.
	
	It remains to verify $b_1(M') = 0$. Denote
	\[ \breve M = M\setminus \cup_{i=1}^k \Sigma_i. \]
	Tracking the homology exact sequence
	\[H_2(\partial \breve M; \ZZ)\xrightarrow{i_*} H_2(\breve M; \ZZ) \xrightarrow{j_*} H_2(\breve M, \partial \breve M; \ZZ) \xrightarrow{\partial} H_1(\partial \breve M; \ZZ),\]
	we find $\ker j_* = \operatorname{img} i_* = H_2(\breve M; \ZZ)$ (due to Lemma \ref{lemm:homology.nd.stable}), so $j_* = 0$, so $H_2(\breve M,\partial \breve M; \ZZ)$ injects into $H_1(\partial \breve M; \ZZ)$. By Lemma \ref{lemm:2d.metric.modification}, $\partial \breve M$ consists of $2$-spheres, so $H_1(\partial \breve M; \ZZ) = 0$ and thus $H_2(\breve M,\partial \breve M; \ZZ)=0$ by the exact sequence. By Lefschetz duality, $H^1(\breve M; \ZZ) = 0$. The Mayer--Vietoris sequence then implies $H^1(M'; \ZZ) = 0$, and thus $b_1(M') = 0$ by the universal coefficient theorem.
\end{proof}

\section{A Digression: Homotopy decomposition in 3D} \label{sec:homotopy.3d}

This section can be skipped at first reading. We digress to present a homotopy-based refinement of Theorem \ref{theo:homology.3d}, show how it can directly imply a certain homotopy version of Theorem \ref{theo:full.3d}, and discuss why we find this of interest. 

\begin{theo} \label{theo:homotopy.3d}
	Every closed, oriented, topologically PSC $3$-manifold can be obtained from a closed, oriented, topologically PSC $3$-manifold with vanishing $\pi_2$ (on each connected component) by performing $0$-surgeries.
\end{theo}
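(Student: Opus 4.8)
The plan is to reprise the proof of Theorem~\ref{theo:homology.3d}, but instead of a single round of PSC $2$-surgeries, to iterate along essential, two-sided, stable minimal $2$-spheres until every component of the ambient manifold becomes irreducible. By the Sphere Theorem, a closed, oriented $3$-manifold has vanishing $\pi_2$ precisely when it is irreducible, so this is exactly the desired conclusion. To make the iteration terminate, I would introduce the Kneser-type complexity
\[
c(M)\;=\;\sum_{N}\Big(\max\{\,p(N)-1,\ 0\,\}\;+\;q(N)\Big),
\]
where $N$ runs over the connected components of $M$, and $p(N)$ (resp.\ $q(N)$) is the number of prime summands (resp.\ summands diffeomorphic to $\SS^1\times\SS^2$) in the prime decomposition of $N$. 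By Kneser's theorem, $c(M)<\infty$; and since $\SS^1\times\SS^2$ is the only closed, oriented, prime $3$-manifold that is not irreducible, $c(M)=0$ if and only if every component of $M$ is irreducible. The proof then runs by induction on $c(M)$: when $c(M)=0$ every component has $\pi_2=0$, so one takes $M'=M$, which is already PSC.

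For the inductive step, assume $c(M)>0$, fix an arbitrary PSC metric $g$ on $M$, and pick a component $N$ that is not irreducible. Since $N$ is reducible it contains an embedded $2$-sphere that does not bound a ball; minimizing area in its isotopy class and applying the theorem of Meeks--Simon--Yau \cite{MeeksSimonYau} yields a nonempty, finite collection of pairwise disjoint, embedded, area-minimizing --- hence stable --- minimal $2$-spheres in $(N,g|_N)$, at least one of which, call it $\Sigma$, is essential (i.e.\ does not bound a ball). As every embedded $2$-sphere in the oriented $3$-manifold $N$ is two-sided, Lemma~\ref{lemm:2d.metric.modification} applies to the single sphere $\Sigma\subset(N,g|_N)$ with the round metric $\varrho_{\SS^2}$ as auxiliary metric; leaving $g$ unchanged on the other components of $M$, we obtain a PSC metric on $M$ isometric to the product $(\SS^2,\varrho_{\SS^2})\times(-2,2)$ in a tubular neighborhood of $\Sigma$. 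Excising the distance-$1$ tubular neighborhood of $\Sigma$ and capping the two new boundary spheres with round PSC $3$-balls --- exactly as in the proof of Theorem~\ref{theo:homology.3d} --- produces a closed, oriented, PSC $3$-manifold $M''$ obtained from $M$ by a single $2$-surgery; equivalently, $M$ is obtained from $M''$ by a $0$-surgery.

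It then suffices to check that $c(M'')<c(M)$, since applying the inductive hypothesis to $M''$ produces the required PSC $3$-manifold $M'$ with $\pi_2=0$ on each component, and $M$ is recovered from $M'$ by composing the $0$-surgery above with the $0$-surgeries coming from $M''$. This is routine $3$-manifold topology. If $\Sigma$ separates $N$, then capping off exhibits $N$ as a connected sum $N_1\#N_2$ with neither $N_i$ a $3$-sphere (else $\Sigma$ would bound a ball), so $p(N)=p(N_1)+p(N_2)\geq 2$ and $q(N)=q(N_1)+q(N_2)$; since the surgery replaces $N$ by $N_1\sqcup N_2$, the corresponding summand of $c$ decreases by exactly $(p(N)-1)-(p(N_1)-1)-(p(N_2)-1)=1$. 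If instead $\Sigma$ is non-separating, then $N\cong N_0\#(\SS^1\times\SS^2)$, the surgery replaces $N$ by $N_0$, and the $\SS^1\times\SS^2$-count of that component drops by $1$ while its prime contribution does not increase; either way $c(M'')<c(M)$. As $0\leq c<\infty$, the induction terminates, completing the proof.

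The one genuinely analytic ingredient beyond Section~\ref{sec:homology.3d} --- and the step I expect to require the most care --- is the production of a \emph{stable}, embedded, \emph{essential} minimal $2$-sphere in a reducible component carrying an arbitrary metric: this is exactly where Meeks--Simon--Yau is used, the neck-pinch degeneration analysis in their minimization theorem being precisely what ensures that the (possibly disconnected) area-minimizer retains an essential component. All remaining steps are classical (the prime-decomposition bookkeeping encoded in $c$) or a verbatim repetition of the $3$-dimensional warmup (the PSC $2$-surgery via Lemma~\ref{lemm:2d.metric.modification}); in particular the argument uses no $3$-dimensional Ricci flow or geometrization. Finally, since by Perelman's work every irreducible, closed, oriented, topologically PSC $3$-manifold is a spherical space form $\SS^3/\Gamma$, feeding this fact into Theorem~\ref{theo:homotopy.3d} gives a new, minimal-surface-theoretic derivation of Theorem~\ref{theo:full.3d}.
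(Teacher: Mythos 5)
Your overall strategy is a genuine alternative to the paper's: the paper captures all of $\pi_2(M)$ in one shot using the Meeks--Yau sphere theorem (Lemma~\ref{lemm:meeks.yau.sphere}), does a single round of $2$-surgeries, and then verifies $\pi_2(M')=0$ by passing to the universal cover and invoking Seifert--Van Kampen and Hurewicz (Claim~\ref{clai:homotopy.3d.claim.components.sc}, \eqref{eq:mtildeprime.covers}--\eqref{eq:h2.tildemprime}); you instead iterate one PSC $2$-surgery at a time along an essential stable minimal sphere, control termination with a Kneser complexity, and conclude via the Sphere Theorem (irreducible $\Leftrightarrow$ $\pi_2=0$). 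The complexity bookkeeping is correct, the reduction of a single PSC $2$-surgery to the product case via Lemma~\ref{lemm:2d.metric.modification} is exactly what the warmup section does, and the route avoids the universal-cover/Hurewicz verification entirely. However, there is a genuine gap in the analytic input, precisely at the point you yourself flag as requiring the most care.

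The gap is the assertion that Meeks--Simon--Yau minimization in the isotopy class of an essential embedded $2$-sphere in an oriented $3$-manifold produces a collection of embedded stable minimal \emph{spheres}, at least one essential. This is not true in general: the varifold limit $\sum_i n_i\Gamma_i$ in the Meeks--Simon--Yau theorem can have components $\Gamma_i$ that are one-sided $\RR\PP^2$'s with even multiplicity, not $2$-spheres. Orientability of the ambient $3$-manifold does not exclude embedded $\RR\PP^2$'s; it only forces them to be one-sided, with tubular neighborhood a twisted $I$-bundle (i.e.\ $\RR\PP^3\setminus B^3$). The basic example is $N=\RR\PP^3\#\RR\PP^3$: for suitable metrics the infimum of area over spheres isotopic to the connect-sum neck is achieved only in the limit by $2\cdot\RR\PP^2_1+2\cdot\RR\PP^2_2$, and there need be no stable essential minimal $2$-sphere at all. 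So the sentence ``yields a nonempty, finite collection of \ldots minimal $2$-spheres \ldots at least one of which is essential'' does not follow from the cited theorem, and the step where you apply Lemma~\ref{lemm:2d.metric.modification} to a two-sided $\Sigma$ can fail because $\Sigma$ may be forced to be a one-sided $\RR\PP^2$. (This is exactly the contingency the paper provides for in Remark~\ref{rema:2d.metric.modification.onesided} and handles explicitly in the proof of Theorem~\ref{theo:homotopy.3d} by capping with a single ball and recording an $\RR\PP^3$ summand.)

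The gap is local and repairable without abandoning your scheme: allow $\Sigma$ to be an essential stable minimal surface that is either a two-sided $\SS^2$ or a one-sided $\RR\PP^2$; in the latter case invoke Remark~\ref{rema:2d.metric.modification.onesided}, excise the twisted $I$-bundle neighborhood of $\Sigma$, cap the resulting $\SS^2$ boundary with a PSC $3$-ball, and record the excised piece (capped off) as a separate $\RR\PP^3$ component, so that the operation is still the inverse of a $0$-surgery. Your complexity $c$ still strictly decreases, since one $\RR\PP^3$ prime summand is removed from the component in question, and $\RR\PP^3$ itself contributes $c=0$ and has $\pi_2=0$. With this amendment the argument goes through and gives a legitimately different proof of the theorem; the trade-off is a more delicate existence statement on the minimal surface side (essential stable $\SS^2$ or $\RR\PP^2$ from Meeks--Simon--Yau, or alternatively from the Meeks--Yau sphere theorem as the paper uses) against a more elementary termination and verification step than the paper's universal-cover argument.
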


Instead of representing $H_2(M; \ZZ)$ by stable minimal $2$-spheres as in Section \ref{sec:homology.3d}, we seek to represent $\pi_2(M)$. The key ingredient is the Meeks--Yau proof of the embedded sphere theorem using minimal surfaces, which will be used in lieu of Lemma \ref{lemm:homology.nd.stable}.

\begin{lemm}[{\cite[Theorem 7]{MeeksYau:loop.sphere}} as Minimal sphere preparation lemma] \label{lemm:meeks.yau.sphere}
	Let $(M, g)$ be a closed, connected $3$-manifold. There exist conformal maps $\{ f_i : \SS^2 \to M \}_{i=1}^k$ with pairwise disjoint images that generate $\pi_2(M)$ as a $\pi_1(M)$-module. Each $f_i$ has least area in its homotopy class and is either a conformal embedding of an $\SS^2$ or a $2:1$ conformal covering of an embedded $\RR \PP^2$.
\end{lemm}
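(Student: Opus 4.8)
The lemma follows from the cited Meeks--Yau theorem together with two standard supplements, and the plan is to assemble it accordingly. First I would reduce to a finiteness statement: $\pi_2(M)$ is finitely generated as a $\ZZ[\pi_1(M)]$-module. This is classical for compact $3$-manifolds; it follows, for instance, from the prime decomposition theorem, since $\pi_2$ of a connected sum is generated by the $\pi_2$'s of the (finitely many) prime summands, each of which is finitely generated over its own group ring. It thus suffices to produce finitely many pairwise disjoint embedded minimal $\SS^2$'s and $\RR\PP^2$'s whose $\pi_1(M)$-translates span $\pi_2(M)$.

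Next I would invoke the Meeks--Yau minimal-surface proof of the sphere theorem. Given a nontrivial class one wishes to capture, area-minimization over maps $\SS^2 \to M$ (and conformal structures on the domain) in the relevant free homotopy class produces a branched minimal immersion $f \colon \SS^2 \to M$ of least area; a least-area map from a genus-zero surface has no branch points, so $f$ is a minimal immersion, and it is conformal because the minimization supplies the conformal structure making the harmonic map conformal. The Meeks--Yau exchange (cut-and-paste) argument then forces the image of $f$ to be embedded: along any self-intersection curve one re-glues to obtain a competitor of no larger area with strictly fewer intersection curves, contradicting minimality; hence the image is an embedded $\SS^2$ or a $2{:}1$ cover of an embedded $\RR\PP^2$. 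The same argument disjoins two such least-area surfaces, and disjoins a newly constructed one from an already-chosen finite family. Starting from the empty family and repeatedly capturing a class that does not yet lie in the $\ZZ[\pi_1(M)]$-submodule of $\pi_2(M)$ generated so far, this process terminates after finitely many steps by the first paragraph and yields the desired $\{ f_i \}$.

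The genuinely substantial ingredient --- which we import wholesale from \cite{MeeksYau:loop.sphere} rather than reprove --- is that the area minimizer has \emph{embedded} image; this is the heart of the exchange argument and demands real care in the one-sided $\RR\PP^2$ case and when one- and two-sided pieces coexist. A secondary subtlety in the bookkeeping is that the cut-and-paste used to enforce disjointness can change the homotopy type of the surviving surfaces, so the iteration must be organized around the submodule of $\pi_2(M)$ generated at each stage (equivalently, by minimizing over a suitable collection of surfaces simultaneously) rather than by fixing individual homotopy classes one at a time. The remaining inputs --- finite generation and conformality --- are routine.
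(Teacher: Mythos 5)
Your proposal is correct and takes essentially the same approach as the paper: the lemma is, in the paper, simply cited as \cite[Theorem 7]{MeeksYau:loop.sphere} (with a pointer to p.~480, Assertion~1, for the pairwise disjointness), and you likewise import the hard embeddedness and disjointness content wholesale from Meeks--Yau. Your sketch of how the Meeks--Yau argument goes (finite generation over $\ZZ[\pi_1(M)]$, area minimization yielding conformal branched-immersion-free maps, cut-and-paste for embeddedness and disjointness, iteration over submodules) is accurate and consistent with the cited result, just more expository than the paper's bare citation requires.
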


Of course, the embeddings or immersions above are stable minimal spheres. (The statement of \cite[Theorem 7]{MeeksYau:loop.sphere} doesn't explicitly mention the pairwise disjointness, but see the comment in the proof of \cite[p. 480, Assertion 1]{MeeksYau:loop.sphere}.) 

\begin{proof}[Proof of Theorem \ref{theo:homotopy.3d}]
	Without loss of generality, our initial manifold $M$ is connected. Endow $M$ with a PSC metric $g$. Let $\{ f_i : \SS^2 \to M \}_{i=1}^k$ be the maps given by Lemma \ref{lemm:meeks.yau.sphere}. The proof is the same as that of Theorem \ref{theo:homology.3d}, except for the fact that some images $\Sigma_i := f_i(\SS^2)$ might be doubly covered embedded one-sided $\RR \PP^2$'s. We apply Lemma \ref{lemm:2d.metric.modification} (see Remark \ref{rema:2d.metric.modification.onesided}) and modify $g$ locally near each $\Sigma_i$, arranging for a new PSC metric $\tilde g$ that, in the distance-$2$ tubular neighborhood of $\Sigma_i$ induces product metrics isometric to $(\SS^2, \varrho_{\SS^2}) \times (-2, 2)$ near $\Sigma_i$, where $\varrho_{\SS^2}$ is round, and all is taken modulo the $\ZZ_2$ action $(x, t) \mapsto (-x, -t)$ when $f_i$ is a double covering of $\Sigma_i$. 
	
	We excise $(M, \tilde g)$ as before, except for all $i$ corresponding with $\Sigma_i \approx \RR \PP^2$, we only use a single PSC $3$-ball and include an $\RR \PP^3$ in the new manifold $M'$. This $M'$ can be obtained from $M$ by $2$-surgeries, and thus $M$ can be obtained from $M'$ by $0$-surgeries. 
	
	It remains to verify that $\pi_2(M') = 0$. Without loss of generality, we may assume $M$ to be connected. Denote by $\pi : \tilde M \to M$ its universal covering and let $\tilde \Sigma := \pi^{-1}(\cup_{i=1}^k \Sigma_i)$. The boundary components of $\tilde M \setminus \tilde \Sigma$ are all $2$-spheres.
	
	\begin{claim} \label{clai:homotopy.3d.claim.components.sc}
		All components of $\tilde M \setminus \tilde \Sigma$ are simply connected. 
	\end{claim}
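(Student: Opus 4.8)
The plan is to prove the Claim by a standard innermost-disk cut-and-paste argument, which in fact uses only that $\tilde M$ is simply connected and that $\tilde\Sigma$ is a disjoint union of embedded, two-sided $2$-spheres. First I would justify the latter: each component of $\tilde\Sigma$ covers one of the $\Sigma_i$, hence is either an $\SS^2$ or an $\RR \PP^2$, and the $\RR \PP^2$ case cannot occur, since a regular neighborhood of an embedded $\RR \PP^2$ has $\pi_1 = \ZZ/2$ and, glued to its (connected) complement along an $\SS^2$, would force $\ZZ/2$ to embed in the trivial group $\pi_1(\tilde M)$ by van Kampen; two-sidedness then follows from orientability of $\SS^2$ and of $\tilde M$ (consistent with the already-noted fact that the boundary components of $\tilde M \setminus \tilde\Sigma$ are $2$-spheres). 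I would also record that every component $\Sigma_j$ of $\tilde\Sigma$ \emph{separates} $\tilde M$: a non-separating closed two-sided hypersurface would carry a loop meeting it transversally in exactly one point, which is impossible because the mod-$2$ intersection pairing with the compact surface $\Sigma_j$ factors through $H_1(\tilde M; \ZZ/2) = 0$. In particular, the two local sides of a product neighborhood $\Sigma_j \times (-1,1)$ lie in \emph{distinct} components of $\tilde M \setminus \tilde\Sigma$.

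The heart of the matter: fix a component $N$ of $\tilde M \setminus \tilde\Sigma$ and a loop $\gamma \colon \SS^1 \to N$, and null-homotope $\gamma$ inside $N$ as follows. Since $\pi_1(\tilde M)$ is trivial, $\gamma$ extends to a map $\bar\gamma \colon D^2 \to \tilde M$, which I perturb rel $\partial D^2$ to be transverse to $\tilde\Sigma$; as $\tilde\Sigma$ is closed in $\tilde M$ and $\bar\gamma(D^2)$ is compact, $\bar\gamma^{-1}(\tilde\Sigma)$ is a finite disjoint union of circles in $\operatorname{int} D^2$. I induct on the number $k$ of these circles. If $k = 0$, then $\bar\gamma(D^2)$ is a connected subset of $\tilde M \setminus \tilde\Sigma$ meeting $N$ along $\gamma(\SS^1)$, hence contained in $N$, so $\gamma$ bounds in $N$. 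If $k \geq 1$, choose an innermost circle $c$ bounding a disk $\Delta \subset \operatorname{int} D^2$ with $\Delta \cap \bar\gamma^{-1}(\tilde\Sigma) = c$. Then $\bar\gamma(\operatorname{int}\Delta)$ lies in a single component $N'$, $\bar\gamma(c)$ lies in a single sphere component $\Sigma_j$, and, crossing $\Sigma_j$ transversally along $c$, $\bar\gamma$ maps a thin collar of $c$ on the outer side of $\Delta$ into the opposite component $N'' \neq N'$. Since $\pi_1(\Sigma_j) = \pi_1(\SS^2)$ is trivial, $\bar\gamma|_c$ bounds a disk $\delta$ in $\Sigma_j$; replacing $\bar\gamma|_\Delta$ by $\delta$ and then pushing the image of $\Delta$ slightly off $\Sigma_j$ into $N''$, within the product neighborhood $\Sigma_j \times (-1,1)$ (which is disjoint from the rest of $\tilde\Sigma$), produces $\bar\gamma'$ with $\bar\gamma'|_{\partial D^2} = \gamma$ and $|\bar\gamma'^{-1}(\tilde\Sigma)| \leq k - 1$. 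By the inductive hypothesis $\gamma$ bounds in $N$, so $\pi_1(N)$ is trivial.

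The only delicate point — hence the step I would be most careful about — is the bookkeeping in this inductive move: one must check that pushing $\delta(\Delta)$ to the side $N''$ (rather than $N'$) genuinely removes $c$ and creates no new intersection circles, which is precisely where two-sidedness of $\Sigma_j$, disjointness of the components of $\tilde\Sigma$, and the separation property are all used, and that $\gamma$ itself is untouched since it lies in $N$, away from $\tilde\Sigma$, while all surgery happens on $\Delta$ and a thin collar. As an alternative one could cap off each boundary $\SS^2$ of $\tilde M \setminus \tilde\Sigma$ with a $3$-ball and invoke van Kampen, noting that the dual graph of the resulting decomposition of $\tilde M$ into closed (possibly noncompact) pieces along separating $2$-spheres is acyclic; the direct argument above, however, copes most transparently with the possibly infinite number of components of $\tilde\Sigma$. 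I note that this proof of the Claim does \emph{not} use the hypothesis that the $f_i$ generate $\pi_2(M)$ as a $\pi_1(M)$-module; that hypothesis will enter only at the next stage, to upgrade simple connectivity of the pieces to $\pi_2(M') = 0$.
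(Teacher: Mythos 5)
Your proof is correct, but it takes a genuinely different route from the paper's. The paper argues globally: it builds a nested exhaustion $\mathcal{E}_1 \subset \mathcal{E}_2 \subset \cdots$ of $\tilde M$ by successively attaching closures of adjacent complementary components, observes that $\partial\mathcal{E}_k \cap \partial\mathcal{C}_k$ is a \emph{single} $2$-sphere (else a loop would meet one such sphere exactly once, contradicting $\pi_1(\tilde M)=0$), deduces from Seifert--Van Kampen that each $\pi_1(\mathcal{E}_k)\to\pi_1(\mathcal{E}_{k+1})$ is injective, and concludes $\pi_1(\mathcal{E}_1)=0$ since the direct limit is $\pi_1(\tilde M)=0$. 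You instead run the classical innermost-disk cut-and-paste on a null-homotopy of a loop in a fixed component, inducting on the number of intersection circles with $\tilde\Sigma$. Both arguments rest on the same two inputs ($\pi_1(\tilde M)=0$ and the fact that the dividing surfaces are $2$-spheres), but your version is more self-contained on the point-set side: it handles the possibly infinite collection of components of $\tilde\Sigma$ via local finiteness and compactness of the disk, whereas the paper's exhaustion argument implicitly needs that the exhaustion reaches every compact set. Conversely, the paper's argument is shorter and avoids the transversality and surgery bookkeeping you rightly flag as the delicate step. Two small remarks: your preliminary step ruling out $\RR\PP^2$ components of $\tilde\Sigma$ is genuinely needed for your approach (you must cap $\bar\gamma|_c$ inside $\Sigma_j$), and is proved correctly, whereas the paper only needs (and only asserts) that the boundary components of the complement are spheres; and your separation observation, while true, is not strictly necessary for the inductive move --- pushing the capping disk to the outer side removes $c$ whether or not $N'\neq N''$. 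Your closing remark that the Claim does not use the $\pi_2$-generation hypothesis is also accurate; the paper's proof does not use it either.
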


	\begin{proof}
			Let $\mathcal{E}_{1}$ be the closure of any component of $\tilde M \setminus \tilde \Sigma$. We inductively construct a monotone increasing exhaustion $\{ \mathcal{E}_{k} \}_{k=1,2,\ldots}$ of $\tilde M$ by taking $\mathcal{E}_{k+1} := \mathcal{E}_{k} \cup \mathcal{C}_{k}$ where $\mathcal{C}_{k}$ is the closure of any component of $\tilde M \setminus \tilde \Sigma$ that shares a boundary component with $\mathcal{E}_{k}$. At each step $\mathcal{E}_{k}$ is connected by construction. Moreover, $\partial \mathcal{E}_{k} \cap \partial \mathcal{C}_{k}$ has at most one component (otherwise one could construct a closed loop intersecting a component of $\partial \mathcal{E}_{k} \cap \partial \mathcal{C}_{k}$ in exactly one point, in contradiction to $\pi_1(\tilde M) = 0$). As a result, by the Seifert--Van Kampen theorem and the fact that all boundary components are $2$-spheres, the map $\pi_1(\mathcal{E}_{i})\to \pi_1(\mathcal{E}_{i+1})$ induced by the inclusion is injective. But $\pi_1(\tilde M) = 0$ is the direct limit of $\pi_1(\mathcal{E}_{i})$. Therefore, $\pi_1(\mathcal{E}_{1}) = 0$.
	\end{proof}
	
	Denote by $\tilde M'$ the manifold obtained by gluing one $3$-ball per boundary $2$-sphere of $\tilde M \setminus \tilde \Sigma$ and one $3$-sphere per component of $\pi^{-1}(\Sigma_i)$ for any $\Sigma_i \approx \RR \PP^2$. By construction:
	\begin{equation} \label{eq:mtildeprime.covers}
		\tilde M' \text{ covers } M'.
	\end{equation}
	By Seifert--Van Kampen and Claim \ref{clai:homotopy.3d.claim.components.sc}:
	\begin{equation} \label{eq:pi1.tildemprime}
		\pi_1(\tilde M') = 0.
	\end{equation}	
	Meanwhile, the argument in the proof of Theorem \ref{theo:homology.3d} gives:
	\begin{equation} \label{eq:h2.tildemprime}
		H_2(\tilde M'; \ZZ) = 0.
	\end{equation}		
	At this point, it follows that
	\[ \pi_2(M') \cong \pi_2(\tilde M') = 0; \]
	the isomorphism follows from \eqref{eq:mtildeprime.covers}, and the vanishing follows from \eqref{eq:pi1.tildemprime}, \eqref{eq:h2.tildemprime}, and the Hurewicz theorem. This completes the proof.
\end{proof}

Theorem \ref{theo:homotopy.3d} implies a certain homotopy version of Theorem \ref{theo:full.3d}:
\begin{coro} \label{coro:homotopy.3d}
	Every closed, oriented, topologically PSC $3$-manifold can be obtained from a closed, oriented, topologically PSC $3$-manifold with components covered by homotopy $3$-spheres, by performing $0$-surgeries.
\end{coro}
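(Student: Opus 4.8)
The plan is to feed the output of Theorem \ref{theo:homotopy.3d} into a purely topological argument. Given a closed, oriented, topologically PSC $3$-manifold $M$, apply Theorem \ref{theo:homotopy.3d} to obtain a closed, oriented, topologically PSC $3$-manifold $M'$, obtained from $M$ by $0$-surgeries, each of whose components $N$ has $\pi_2(N) = 0$. Since $0$-surgeries on $M'$ recover $M$, it then suffices to prove the following: \emph{every closed, oriented, topologically PSC $3$-manifold $N$ with $\pi_2(N) = 0$ is covered by a homotopy $3$-sphere.}

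To prove this claim I would pass to the universal cover $\pi : \tilde N \to N$, so that $\pi_1(\tilde N) = 0$ and $\pi_2(\tilde N) \cong \pi_2(N) = 0$, and split into two cases according to whether $\pi_1(N)$ is finite or infinite. If $\pi_1(N)$ is finite, then $\tilde N$ is a closed, simply connected, oriented $3$-manifold; Poincar\'e duality and the universal coefficient theorem give $H_0(\tilde N;\ZZ) = H_3(\tilde N;\ZZ) = \ZZ$, $H_1(\tilde N;\ZZ) = 0$, $H_2(\tilde N;\ZZ) \cong H^1(\tilde N;\ZZ) = 0$, so by Hurewicz a generator of $\pi_3(\tilde N) \cong H_3(\tilde N;\ZZ)$ is represented by a degree-one map $\SS^3 \to \tilde N$, which is a homology isomorphism between simply connected spaces, hence a homotopy equivalence by Whitehead's theorem. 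Thus $\tilde N$ is a homotopy $3$-sphere, so $N$ is covered by one, as desired.

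It remains to rule out the case $\pi_1(N)$ infinite. Then $\tilde N$ is a noncompact $3$-manifold, hence homotopy equivalent to a $2$-complex, so $H_k(\tilde N;\ZZ) = 0$ for $k \geq 3$; combined with $H_1(\tilde N;\ZZ) = H_2(\tilde N;\ZZ) = 0$ (simple connectivity together with $\pi_2 = 0$ and Hurewicz), $\tilde N$ is weakly contractible, hence contractible. Therefore $N$ is a closed $K(\pi_1(N),1)$, i.e.\ a closed aspherical $3$-manifold. But a closed aspherical $3$-manifold admits no PSC metric --- this is the classical Schoen--Yau/Gromov--Lawson obstruction (via stable minimal surfaces, or enlargeability), and importantly it does \emph{not} invoke geometrization --- contradicting the topological PSC hypothesis on $N$. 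So this case does not occur, completing the proof of the claim and hence of Corollary \ref{coro:homotopy.3d}. I expect the only genuine obstacle to be this last input, the geometrization-free nonexistence of PSC metrics on closed aspherical $3$-manifolds; this is also precisely the point that makes Corollary \ref{coro:homotopy.3d} a geometrization-free homotopical shadow of Theorem \ref{theo:full.3d}, rather than a trivial consequence of it.
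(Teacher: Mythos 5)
Your proposal is correct and follows essentially the same route as the paper: feed Theorem \ref{theo:homotopy.3d} into the dichotomy on $|\pi_1|$, handle the finite case via Poincar\'e duality, Hurewicz and Whitehead on the universal cover, and rule out the infinite case by deducing asphericity and invoking the geometrization-free Schoen--Yau/Gromov--Lawson obstruction. Your write-up is in fact slightly more explicit than the paper's (which compresses the finite-$\pi_1$ case into ``it suffices to show $\pi_1(M')$ is finite''), but there is no substantive difference.
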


\begin{proof}
	Let $M'$ be the manifold obtained from Theorem \ref{theo:homotopy.3d}. Without loss of generality, $M'$ is connected. It suffices to show that $\pi_1(M')$ is finite. Suppose that $\pi_1(M')$ were infinite. Then the universal covering $\tilde M'$ of $M'$ would be noncompact, so $H_3(\tilde M'; \ZZ) = 0$. On the other hand, $\pi_2(\tilde M') \cong \pi_2(M') = 0$. Then, $\pi_3(M') \cong \pi_3(\tilde M') \cong H_3(\tilde M'; \ZZ) = 0$, the second isomorphism being Hurewicz's. This strategy iterates to give $\pi_k(M') = 0$ for all $k \geq 2$, and thus $M'$ is aspherical. This violates the non-asphericity result for 3D PSC: by Schoen--Yau and Gromov--Lawson (see \cite[Theorem E]{GromovLawson:summary}, \cite{SchoenYau:4d}), closed aspherical $3$-manifolds are not topologically PSC. 
\end{proof}

It is interesting to compare Corollary \ref{coro:homotopy.3d} with the Schoen--Yau and Gromov--Lawson approach to obtaining a homotopy version of Theorem \ref{theo:full.3d}:
\begin{enumerate}
	\item[(a)] By the Kneser--Milnor prime decomposition of $3$-manifolds \cite{Milnor:decomposition}, and further decomposing all prime $\SS^2 \times \SS^1$'s as $0$-surgeries on $\SS^3$'s, we see that any closed, oriented $3$-manifold $M$ can be obtained from:
	\begin{itemize}
		\item aspherical $3$-manifolds, and/or
		\item manifolds covered by homotopy $3$-spheres
	\end{itemize}
	by performing only $0$-surgeries. 
	\item[(b)] One knows \cite[Theorem E]{GromovLawson:summary} (see also \cite{SchoenYau:4d}) that aspherical $3$-manifolds \textit{cannot} occur as prime summands of PSC $3$-manifolds. This rules out the first source of summands in (a). Thus, any closed oriented $3$-manifold $M$ can be obtained from a closed, oriented $3$-manifold with components covered by homotopy $3$-spheres, by performing $0$-surgeries.
\end{enumerate}

In this approach, the topological decomposition in step (a) prevents step (b) from deciding whether the decomposed pieces, which are all covered by homotopy $3$-spheres, are themselves topologically PSC. (Of course, this follows from Perelman's resolution of the elliptization conjecture.) In our approach, step (a) was Theorem \ref{theo:homotopy.3d}, a geometric PSC surgery result that allows us to directly guarantee that the decomposed pieces are, too, topologically PSC. Step (b) is essentially unchanged for us.

A note about higher dimensions. Step (b) of the program above was recently carried out for $4$-(and $5$-)manifolds by the second author together with Otis Chodosh \cite{ChodoshLi:aspherical} (see also \cite{Gromov:aspherical, ChodoshLiLiokumovich:psc}). Step (a) remains a challenge. There are no suitable topological decompositions to perfectly replace step (a) in 4D, and we instead also proceed with a homological decomposition obtained via geometric measure theory.

\section{Main Theorem: Homology decomposition in 4D} \label{sec:homology.4d}

We now extend the strategy of Section \ref{sec:homology.3d} to 4D. We capture $H_3(M; \ZZ)$ by a stable minimal hypersurface $\Sigma$, using Lemma \ref{lemm:homology.nd.stable}. In the current higher dimensional setting we need to use Lemma \ref{lemm:3d.metric.modification} (instead of the two-dimensional Lemma \ref{lemm:2d.metric.modification}) to reduce to the product case. We state it below but defer its proof to Appendix \ref{sec:technical.lemmas}. 

\begin{lemm} \label{lemm:3d.metric.modification}
	Let $\Sigma$ be a two-sided, closed, embedded, stable, minimal hypersurface inside an oriented PSC $4$-manifold $(M, g)$. Then:
	\begin{enumerate}
		\item[(a)] $\Sigma$ must be topologically PSC and thus as in Theorem \ref{theo:full.3d}.
		\item[(b)] Given any auxiliary PSC metric $\sigma$ on $\Sigma$, there exists a new PSC metric $\tilde g$ on $M$, which:
			\begin{itemize}
				\item is isometric to a product cylinder $(\Sigma, \sigma) \times (-2, 2)$ in the distance-$2$ tubular neighborhood of $\Sigma$, and
				\item coincides with $g$ outside a larger tubular neighborhood of $\Sigma$.
			\end{itemize}
	\end{enumerate}
\end{lemm}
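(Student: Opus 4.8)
The plan is to obtain part (a) from the by-now-standard Schoen--Yau conformal trick and part (b) from a local deformation of $g$ in a tubular neighborhood of $\Sigma$, refining (the proof of) Lemma~\ref{lemm:2d.metric.modification} to the case of three cross-sectional dimensions.

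For (a): two-sidedness provides a global unit normal $\nu$; write $g_0$ for the induced metric and $A$ for the second fundamental form, so that the mean curvature of $\Sigma$ vanishes. With vanishing mean curvature the contracted Gauss equation reads $2\Ric_g(\nu,\nu) = R_g - R_{g_0} - |A|^2$. Substituting it into the stability inequality $\int_\Sigma |\nabla\phi|^2 \geq \int_\Sigma (|A|^2 + \Ric_g(\nu,\nu))\phi^2$ gives
\[
	\int_\Sigma \big( 2|\nabla\phi|^2 + R_{g_0}\phi^2 \big) \;\geq\; \int_\Sigma \big( |A|^2 + R_g \big)\phi^2 \;>\; 0
\]
for every $\phi \not\equiv 0$ (using $R_g > 0$). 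Since $2 \leq 8 = \tfrac{4(\dim\Sigma - 1)}{\dim\Sigma - 2}$, the same inequality holds a fortiori with $8$ in place of $2$; that is, the conformal Laplacian $-8\Delta_{g_0} + R_{g_0}$ of $(\Sigma,g_0)$ has positive first eigenvalue, so $(\Sigma,g_0)$ is conformal to a PSC metric. In particular each component of $\Sigma$ is topologically PSC, whence Theorem~\ref{theo:full.3d} describes it.

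For (b): work in Fermi coordinates, writing $g = dt^2 + g_t$ on a tubular neighborhood $\Sigma \times (-\varepsilon,\varepsilon)$, with $g_0$ as above and $\partial_t g_t|_{t=0} = 2A$. The heart of the matter is to replace $g$, near $\Sigma$ and left unchanged outside a slightly larger tube, by a PSC metric that is a genuine product $dt^2 + \gamma_0$ for some PSC metric $\gamma_0$ on $\Sigma$; two obstructions must be overcome simultaneously, namely that $g_t$ is not $t$-independent and that $g_0$ itself is only conformally PSC. This is exactly where minimality and stability are decisive. Minimality makes $\partial_t g_t|_{t=0} = 2A$ trace-free; combined with the non-negativity of the first eigenvalue of the Jacobi operator $-\Delta_{g_0} - |A|^2 - \Ric_g(\nu,\nu)$ (i.e.\ the stability inequality) and with the positivity of the conformal Laplacian $-6\Delta_g + R_g$ of $(M,g)$ (a consequence of $R_g > 0$), one can carry out a simultaneous conformal change and flattening of the metric in a tube about $\Sigma$ while keeping scalar curvature positive, landing on $dt^2 + \gamma_0$ near $\Sigma$ with $\gamma_0$ a PSC metric in the conformal class of $g_0$ --- say its Yamabe representative, which is PSC by part (a). The transition between the Fermi form of $g$ and this product should be spread over a collar whose length, as measured in the new metric, may be taken as large as needed, so that the terms involving $t$-derivatives of the interpolating cross-sectional metrics become negligible; scaling $\gamma_0$ down beforehand endows the product end with an arbitrarily large lower bound on scalar curvature, which further absorbs these errors.

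It then remains to reach the \emph{prescribed} metric $\sigma$ rather than $\gamma_0$. One chooses a path $(\gamma_s)_{s \in [0,1]}$ of PSC metrics on $\Sigma$ from $\gamma_0$ to $\sigma$ and glues, along a further long collar, the metric $dt^2 + \gamma_{\rho(t)}$ for a fixed cutoff $\rho$; its scalar curvature equals $R_{\gamma_{\rho(t)}}$ plus terms that go to zero as the collar length grows, hence stays positive for a long enough collar. Such a path exists because, by Marques's theorem, the moduli space of PSC metrics on a closed oriented $3$-manifold is path-connected --- modulo a diffeomorphism of $\Sigma$, which is harmless since it only changes the identification of the product's $\Sigma$-factor with $\Sigma \subset M$; and for the explicit model metrics $\sigma$ used in our applications this step is elementary. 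Taking the $dt^2 + \sigma$ piece to occupy $(-2,2)$ and performing the construction symmetrically on both sides of $\Sigma$ completes the argument. I expect the main obstacle to be the combined flattening-and-conformal step of the previous paragraph: a naive interpolation of metrics across the tube would inject uncontrolled negative scalar curvature, and it is precisely two-sidedness, minimality, and stability --- none available for a generic separating hypersurface --- that make the careful version go through. Compared with Lemma~\ref{lemm:2d.metric.modification}, the genuinely new subtlety is that in three cross-sectional dimensions the induced metric need not be PSC, so one must pass through its conformal class rather than invoke Gauss--Bonnet.
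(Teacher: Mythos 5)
Your part~(a) is correct and matches the standard argument (the Schoen--Yau rearrangement with the scalar curvature term in the stability inequality, followed by the observation that $2 \leq \tfrac{4(n-2)}{n-3}$ for $n=4$).

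Your part~(b) takes a genuinely different route from the paper's, and the route contains a gap at precisely the step you yourself flag as the ``main obstacle.'' You attempt to modify the Fermi-coordinate metric $dt^2 + g_t$ \emph{in place}, performing a ``simultaneous conformal change and flattening'' so that $g_t$ becomes a product with a PSC cross-section $\gamma_0$ near $\Sigma$. You assert this can be done using stability, minimality, and the positivity of the ambient conformal Laplacian, but you give no construction, and it is not clear the argument closes: interpolating $g_t$ to $\gamma_0$ over a band in $t$ introduces $\partial_t$-derivative terms in the scalar curvature that are not small without a mechanism to absorb them, and the conformal change has to be performed globally enough to fix the scalar curvature of the cross-section yet locally enough to leave $g$ unchanged away from $\Sigma$. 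You acknowledge this is the hard part but do not do it; that is the gap.

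The paper avoids this step entirely. Rather than flattening $g$, it cuts $(M,g)$ open along $\Sigma$ --- a purely topological operation that does not touch the metric --- and then glues back in a \emph{warped} product cylinder $(\Sigma \times [0,4], \sigma_t + u^2\,dt^2)$ (plus its reflection), where $u(\cdot,t)$ is the first eigenfunction of $-\Delta_{\sigma_t} + \tfrac12 R_{\sigma_t}$ and the warping is scaled large. The key input from stability is only that the induced metric $\sigma_0$ lies in the space $\sM(\Sigma)$ of metrics with positive principal eigenvalue of $-\Delta + \tfrac12 R$ (by \cite[Lemma C.6]{LiMantoulidis:metrics.lambda1}); this is weaker than being PSC, which is why the path-connectedness result the paper needs is that of $\sM(\Sigma)$, from \cite[Proposition 3.1]{LiMantoulidis:metrics.lambda1}, rather than the Marques/Bamler--Kleiner path-connectedness of the space of PSC metrics that you invoke. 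After the gluing, the metric is only $C^{0,1}$ across the cuts, but the matching hypersurfaces are minimal from both sides (mean curvatures agree, both zero), so the corner is smoothed while preserving $R>0$ by a localized version of Miao's gluing. Your route, if completed, would be more self-contained since it never leaves the smooth category, but as written the central flattening step is an unproved assertion; the paper's cut-and-insert construction sidesteps it cleanly.

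Two smaller remarks. First, you do not need to pass through the Yamabe representative $\gamma_0$ at all: as in the paper, $\sigma_0 \in \sM(\Sigma)$ already, and the warped collar construction of Lemma~\ref{lemm:collar} handles paths through $\sM(\Sigma)$ directly, so there is no reason to first go conformal. Second, if you do want to use path-connectedness of PSC metrics on a $3$-manifold, you can cite Bamler--Kleiner \cite{BamlerKleiner:contractibility} for the honest statement (not merely modulo diffeomorphism), avoiding the bookkeeping issue you mention with Marques's theorem.
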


We will apply this lemma with various choices of $\sigma$, one being:

\begin{defi}[Model metric] \label{defi:3d.model.metric}
	Let $\Sigma$ be obtained by performing $0$-surgeries on spherical space forms. A PSC metric on $\Sigma$ is called a model metric if the neck corresponding to each $0$-surgery contains an isometric copy of $(\SS^2, \varrho) \times (-2, 2)$ for some size round metric $\varrho$ on $\SS^2$.
\end{defi}

\begin{proof}[Proof of Theorem \ref{theo:homology.4d}]
	Without loss of generality, $M$ is connected. Fix a PSC metric $g$ on $M$, let $\Sigma$ be as in Lemma \ref{lemm:homology.nd.stable}. We will stray slightly from the notation in the statement of Theorem \ref{theo:homology.4d} below: $M'$ will not denote the ultimate decomposition, but only an intermediate one, and the ultimate decomposition will be denoted $M''$. 
	
	\textbf{Step I}. In this step, we assume that $\Sigma$ involves at least one $0$-surgery, otherwise we proceed to \textbf{Step II} by setting $(M', g') := (M, g)$ and $\Sigma' := \Sigma$.
	
	By Lemma \ref{lemm:3d.metric.modification} we can modify the metric $g$ of $M$ locally near $\Sigma$ and arrange for a new PSC metric $\tilde g$ on $M$ that induces a product metric $(\Sigma, \sigma) \times (-2, 2)$ in the distance-$2$ tubular neighborhood of $\Sigma$, where $\sigma$ is a model metric (Definition \ref{defi:3d.model.metric}).
	
	Now consider the neck regions $\{ N_i \}_{i=1}^\ell$ of $\Sigma$ where, due to the model metric structure, $(\Sigma, \sigma)$ restricts on $N_i$ to an isometric copy of $(\SS^2, \varrho_i)$, where $\varrho_i$ is a round metric on $\SS^2$ of radius $\eps_i$. By construction of $\tilde g$, the distance-$2$ tubular neighborhood of $N_i$ in $(M, \tilde g)$ is isometric to $(\SS^2, \varrho_i) \times (-2, 2) \times (-2, 2)$. Let $U_i$ be the interior of a smoothing of the domain $\SS^2 \times [-1, 1] \times [-1, 1]$ in these coordinates, where the smoothing only takes place outside $\SS^2 \times [-\tfrac12, \tfrac12] \times [-1, 1]$ and $\SS^2 \times [-1,1] \times [-\tfrac12, \tfrac12]$. Note that $U_i \approx \SS^2 \times B^2$. For small $\eps_i$, we can construct a PSC metric $\eta_i$ on $V_i := B^3 \times \SS^1$ which matches smoothly with $U_i$ on their respective boundaries ($\approx \SS^2 \times \SS^1$). Then,
	\[ (M', g') := (M \setminus \cup_{i=1}^\ell U_i, \tilde g) \cup \big( \cup_{i=1}^\ell (V_i, \eta_i) \big) \]
	with the obvious boundary identifications, is a PSC 4-manifold obtained from $M$ by $2$-surgeries. Thus, $M$ can be obtained from $M'$ by $1$-surgeries. The surgeries above can be performed so that $\Sigma$ gets replaced by a hypersurface $\Sigma'$, whose components are spherical space forms.
	Moreover, by choosing $(V_i, \eta_i)$ to be a local product on the $\SS^1$ factor, we may assume that the metric $g'$ is locally a product near $\Sigma'$ and ensure that $\Sigma'$ is again stable.
	
	Note that the components of $M' \setminus \Sigma'$ arise from the components of $M \setminus \Sigma$ by attaching copies of $B^3 \times [-1,1]$ along $\SS^2 \times [-1,1]$ to its boundary components. This shows that the surjectivity of the natural map 
	$H_3(\partial (M' \setminus \Sigma'); \ZZ) \twoheadrightarrow H_3(M' \setminus \Sigma'; \ZZ)$,
	initially guaranteed in Lemma \ref{lemm:homology.nd.stable} 
	for $H_3(\partial (M \setminus \Sigma); \ZZ) \twoheadrightarrow H_3(M \setminus \Sigma; \ZZ)$, 
	is maintained. 
	
	Finally, since we're performing $2$-surgeries, we have $b_2(M') \leq b_2(M)$. (See Lemma \ref{lemm:1.surg.b1.b2}, where the roles of $M$ and $M'$ are reversed.)
		
	\textbf{Step II}. In this step we work with $(M', g')$ and the components $\{ \Sigma_i' \}_{i=1}^{k'}$ of $\Sigma'$, each of which is, by construction, a spherical space form, i.e., $\Sigma_i' \approx \SS^3 / \Gamma_i$, for finite subgroups $\Gamma_i$ of $SO(4)$ acting freely on $\SS^3$.
	
	We invoke Lemma \ref{lemm:3d.metric.modification} again, except we modify the metric $g'$ of $M'$ locally near each $\Sigma_i'$ and arrange for a new PSC metric $\tilde g'$ that induces a product metric $(\Sigma_i', \sigma_i') \times (-2, 2)$ in the distance-$2$ tubular neighborhood of $\Sigma_i'$, where $\sigma_i'$ is a round metric on $\Sigma_i'$. For convenience, we denote by $A \subseteq \{ 1, 2, \ldots, k' \}$ the set of $i$'s for which $\Sigma_i' \approx \SS^3$, and by $B \subseteq \{ 1, 2, \ldots, k' \}$ the remaining $i$'s.
	
	For each $i \in A$, we can excise the distance-$1$ tubular neighborhood $U_i'$ of $\Sigma_i'$ and smoothly replace it with two PSC 4-balls $\{ (B^\alpha_i, \theta_i^\alpha) \}_{\alpha=1}^2$. Note that this is a $3$-surgery, and is therefore undone with a $0$-surgery.
	
	For each $i \in B$, we can still excise the distance-$1$ tubular neighborhood $U_i'$ of $\Sigma_i'$, but now we have to smoothly glue in two PSC 4-orbifold-balls $\{ (\hat B^\alpha_i, \hat \theta_i^\alpha) \}_{\alpha=1}^2$ whose orbifold singularity is modeled on on $\RR^4 / \Gamma_i$. Note that this is an orbifold $3$-surgery, and is therefore undone by a $0$-surgery at orbifold points.
	
	The ultimate space we end up with is
	\[ (M'', g'') := (M' \setminus \cup_{i=1}^k U_i', \tilde g') \cup \big( \cup_{i \in A} \cup_{\alpha=1,2} (B^\alpha_i, \theta^\alpha_i) \big) \cup \big( \cup_{i \in B} \cup_{\alpha=1,2} (\hat B^\alpha_i, \hat \theta^\alpha_i) \big). \]
	As explained, $M$ can be obtained from $M''$ by performing $0$-surgeries (possibly on the orbifold points) and then $1$-surgeries (on the smooth part).
	
	We now prove $b_1(M'') = 0$. Set $\breve M := M' \setminus \Sigma'$. As in the proof of Theorem~\ref{theo:homology.3d}, we track the homology exact sequence
	\[ H_3(\partial \breve M; \ZZ)\xrightarrow{i_*} H_3(\breve M; \ZZ) \xrightarrow{j_*} H_3(\breve M, \partial \breve M; \ZZ) \xrightarrow{\partial} H_2(\partial \breve M; \ZZ).\]
	We have $H_2(\partial \breve M; \ZZ) = 0$ since the components $\Sigma_i'$ of $\partial \breve M$ are spherical space forms. As before, $i_*$ is guaranteed to be surjective, so $H_3(\breve M, \partial \breve M; \ZZ) = 0$, so $H^1(\breve M; \ZZ) = 0$ by Lefschetz duality, and $b_1(M'') = 0$ from Mayer--Vietoris. 
	
	Finally, $b_2(M'') = b_2(M') \leq b_2(M)$. The inequality follows from \textbf{Step I} and the equality from the $3$-surgeries on $M'$ to get $M''$. The latter can be verified applying Mayer--Vietoris twice to get $b_2(M'') = b_2(\breve M) = b_2(M')$: once on the open cover of $M'$ by $\breve M$ and $\cup_{i=1}^k U_i'$, and once on the open cover of $M''$ by $\breve M$ and $(\cup_{i,\alpha} B_i^\alpha) \cup (\cup_{i,\alpha} \hat B_i^\alpha)$.
\end{proof}

\begin{rema} \label{rema:homology.4d.orbifold}
	The orbifold singularities in the decomposition are due to the spherical space forms appearing in $\Sigma'$ in \textbf{Step II} above. 
\end{rema}

\begin{rema} \label{rema:spin.4d.conclusion}
	Our proof of Theorem \ref{theo:homology.4d} preserves the spin condition; i.e., if $M$ is spin, then the ultimate orbifold $M'$ is spin too, in the sense that its regular part $M''_{\text{reg}}$ is spin. In Step I we can endow each $B^3 \times \SS^1$ we are gluing in to replace $U_i$ with a spin structure to induce the same spin structure on $\SS^2 \times \SS^1$ that $\partial U_i$ did; one can see this, e.g., from that $\pi_1(\SS^2 \times \SS^1) \to \pi_1(B^3 \times \SS^1)$ is an isomorphism and $\pi_2(B^3 \times \SS^1) = 0$. Likewise, in Step II we can similarly endow each $B_i^\alpha$ or $\hat B_i^\alpha$ with a spin structure to induce the same spin structure on each $\SS^3 / \Gamma_i$ that $\Sigma_i'$ did from either side.
\end{rema}

\section{Proof of Theorem \ref{theo:s3g.s1}} \label{sec:s3g.s1}

Suppose, for the sake of contradiction, that $M = (\SS^3 / \Gamma) \times \SS^1$ can be obtained by manifold $0$ and $1$-surgeries from a $4$-manifold $M'$ with $b_1(M')=0$. The key features of $M$ are that 
\begin{equation} \label{eq:s3g.s1.pi1} 
	\pi_1(M) = \Gamma \times \ZZ \text{ with } \Gamma \leq SO(4) \text{ finite, cyclic, nontrivial,}
\end{equation}
\begin{equation} \label{eq:s3g.s1.b1.b2}
	b_1(M) = 1, \; b_2(M) = 0.
\end{equation}
Since $1<4/2$, all the $0$ and $1$-surgeries commute. So first take the connected sum of all connected components of $M'$, and call it $M''$. Note that $b_1(M'') = 0$. Perform all remaining $0$-surgeries, so that $M''$ turns into $M''\# k(\SS^3\times \SS^1)$ for some integer $k \geq 0$. Then, $M$ is obtained by performing $1$-surgeries on $M''\# k(\SS^3\times \SS^1)$, which has
\[ b_1(M'' \# k(\SS^3 \times \SS^1)) = k. \]
Combining Lemma \ref{lemm:1.surg.b1.b2} with \eqref{eq:s3g.s1.b1.b2}, we see that the integer $k$ above is such that $k \geq 1$ and that exactly $(k-1)$ $1$-surgeries were performed. 

On the level of fundamental groups, since $M$ is obtained from $M'' \# k(\SS^3 \times \SS^1)$ by performing $(k-1)$ $1$-surgeries, \eqref{eq:s3g.s1.pi1}  yields
\begin{equation*} 
	\Gamma \times \ZZ \cong \left\langle G * \ZZ^{*k} \bigg\vert r_1,\cdots,r_{k-1}\right\rangle,
\end{equation*}
where $G = \pi_1(M'')$, $\ZZ^{*k} = \ZZ * \cdots * \ZZ$ ($k$ times) and $r_1, \ldots, r_{k-1}$ represent the relations (possibly trivial) introduced by the $1$-surgeries. We are led to a contradiction from this presentation of $\Gamma \times \ZZ$ and that the abelianization of $G$ has rank $b_1(M'') = 0$ via the following group theoretic lemma.

\begin{lemm}
	Suppose that $\Gamma$ is a nontrivial finite cyclic group, and that G is a group whose abelianization has rank 0. Then $\Gamma \times \ZZ$ cannot be expressed as
	\begin{equation} \label{eq:s3g.s1.iso}
		\Gamma \times \ZZ \cong \left\langle G * \ZZ^{*k} \bigg\vert r_1,\cdots,r_{k-1}\right\rangle.
	\end{equation}
\end{lemm}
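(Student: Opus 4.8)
The plan is to derive a contradiction by abelianizing and then carefully extracting torsion and rank information from the presentation. Abelianizing the right-hand side of \eqref{eq:s3g.s1.iso} kills the relators $r_1,\dots,r_{k-1}$ up to their images in the abelianization, so we get a surjection $\ZZ^k \twoheadrightarrow (\Gamma\times\ZZ)^{\mathrm{ab}} = \Gamma\times\ZZ$ (using $b_1(M'')=0$, the $G$ factor contributes nothing to the free rank and we must track that it also contributes nothing that survives — more on this below), followed by the observation that the abelianization of $\langle G * \ZZ^{*k}\mid r_1,\dots,r_{k-1}\rangle$ is $\ZZ^k\oplus G^{\mathrm{ab}}$ modulo the subgroup generated by the $k-1$ images of the relators. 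Since $\Gamma\times\ZZ$ has free rank $1$ and the quotient of $\ZZ^k\oplus G^{\mathrm{ab}}$ by a subgroup generated by $k-1$ elements has free rank $\geq k - (k-1) = 1$ only if $G^{\mathrm{ab}}$ is finite (which it is, as it has rank $0$), I would first nail down that the free rank forces each relator $r_j$ to be ``used up'' nontrivially, i.e. the $k-1$ relator images are linearly independent modulo torsion in $\ZZ^k\oplus G^{\mathrm{ab}}$.

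Next I would pass to a cleaner algebraic model. Let $N$ be the normal closure of $r_1,\dots,r_{k-1}$ in the free product $F := G * \ZZ^{*k}$, so $\Gamma\times\ZZ \cong F/N$. The key structural tool is Grushko's theorem (or the theory of free products): $\Gamma\times\ZZ$ is \emph{freely indecomposable} (it has nontrivial center and is not $\ZZ$, and a finite-by-cyclic group of this form does not split as a nontrivial free product), so any expression of it as a quotient of $G*\ZZ^{*k}$ with ``few'' relators is constrained. More concretely, I expect the cleanest route is: the deficiency of the presentation \eqref{eq:s3g.s1.iso} is (number of generators of $G$, plus $k$) minus $(k-1)$, and one compares this with invariants of $\Gamma\times\ZZ$. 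The slick version uses homology: $H_1(\Gamma\times\ZZ) = \Gamma\times\ZZ$ and $H_2(\Gamma\times\ZZ) = \Gamma$ (by Künneth, $H_2(\Gamma)\oplus H_1(\Gamma)\otimes H_1(\ZZ) = 0\oplus\Gamma$ for $\Gamma$ cyclic). A presentation with $g$ generators and $r$ relators of a group $Q$ satisfies $r - g \geq \mathrm{rank}\,H_2(Q) - \mathrm{rank}\,H_1(Q)$ — but here the generating set of $G$ may be infinite, so I would instead use the relative version: since $\Gamma\times\ZZ$ is obtained from $\pi_1(M''\# k(\SS^3\times\SS^1))$ by adjoining only $k-1$ relations, and $M''$ is a closed oriented $4$-manifold, one can run the Mayer–Vietoris / Wang-sequence computation already implicit in Lemma~\ref{lemm:1.surg.b1.b2}.

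Concretely, here is the step order I would carry out. (1) Abelianize: obtain $\Gamma\times\ZZ \cong (\ZZ^k\oplus G^{\mathrm{ab}})/\langle \bar r_1,\dots,\bar r_{k-1}\rangle$; since $G^{\mathrm{ab}}$ is finite, the torsion-free rank of the left side is $1$ and of the numerator is $k$, forcing the $\bar r_j$ to span a rank-$(k-1)$ subgroup modulo torsion; deduce that modulo torsion the quotient map $\ZZ^k \to \ZZ$ has a specified kernel. (2) Extract torsion: the torsion subgroup of $(\ZZ^k\oplus G^{\mathrm{ab}})/\langle\bar r_j\rangle$ must be exactly $\Gamma$; using that the $\bar r_j$ are ``mostly'' in $\ZZ^k$ after a change of basis, relate $|\Gamma|$ to $|G^{\mathrm{ab}}|$ and the elementary divisors of the relator matrix — in particular $|\Gamma|$ divides $|G^{\mathrm{ab}}|$ times those divisors, giving no contradiction yet. (3) Use the centralizer / $\pi_1$ structure: here is where I expect the real obstacle. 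The point must be that $\Gamma\times\ZZ$, being freely indecomposable with $\ZZ$ a direct factor, cannot be killed down to from a free product with $k$ free $\ZZ$ factors using only $k-1$ relators \emph{unless} $G$ already surjects onto something with the needed torsion $\Gamma$ — but $G^{\mathrm{ab}}$ having rank $0$ is not enough to contradict having torsion. So step (3) really needs Grushko: write $\Gamma\times\ZZ = Q$; since $Q$ is freely indecomposable and not infinite cyclic, and $F = G*\ZZ^{*k} \twoheadrightarrow Q$, Grushko's theorem gives that $Q$ is a quotient of $G$ alone after absorbing the $\ZZ$ factors into relations — but each $\ZZ$ factor needs at least one relation to be ``tied down,'' and we have only $k-1<k$ of them; counting forces one $\ZZ$ factor to survive \emph{freely}, i.e. $Q \cong Q_0 * \ZZ$, contradicting free indecomposability of $\Gamma\times\ZZ$.

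The main obstacle, then, is making the Grushko counting rigorous: precisely, showing that if $F = G * \ZZ^{*k} \to Q$ is surjective and $Q$ is freely indecomposable and not $\ZZ$, then the kernel's normal closure requires \emph{at least} $k$ relators that genuinely involve the free $\ZZ$ factors (so $k-1$ cannot suffice), OR $Q$ is a retract-quotient of $G$ with all torsion coming from $G$ — and in the latter case $H_1$ or a transfer argument on $G^{\mathrm{ab}}$ (rank $0$ but we'd need more: we'd need $G^{\mathrm{ab}}$ to surject onto $\Gamma$, and then localize). Actually the cleanest fix may be to invoke the \emph{deficiency} bound directly: $\mathrm{def}(Q) \geq \mathrm{rank}\,H_1(Q) - d(H_2(Q))$ where $d$ denotes minimal number of generators; for $Q = \Gamma\times\ZZ$ this reads $\mathrm{def} \geq (1 + d(\Gamma)) - d(\Gamma) = 1$, yet the presentation \eqref{eq:s3g.s1.iso} together with a finite presentation-compatible generating set of the finitely-presented group $G$ (which it is, being $\pi_1$ of a closed $4$-manifold) has deficiency $= (d(G) + k) - (r(G) + (k-1)) = \mathrm{def}(G) + 1$, and $\mathrm{def}(G) \leq \mathrm{rank}\,H_1(G) = 0$, so $\mathrm{def}(Q) \leq 1$; equality $\mathrm{def}(Q)=1$ then forces $\mathrm{def}(G)=0$ and the deficiency of $Q$ to be \emph{realized}, which for $\Gamma\times\ZZ$ is known to fail when $\Gamma$ is nontrivial cyclic (a deficiency-$1$ group with $H_2$ nontrivial finite must be a surface group or similar — a $\ZZ$ direct factor with finite torsion cotorsion obstructs this via the Euler-characteristic/$L^2$-Betti-number argument). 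I would aim to present whichever of these (Grushko counting vs. deficiency-realization) is shortest; I suspect the Grushko free-indecomposability argument in step (3) is the honest heart of the matter and will be where the write-up spends most of its effort.
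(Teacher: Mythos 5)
Your proposal does not close; there are two genuine gaps.

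First, the deficiency argument as you state it does not yield a contradiction. You correctly note $\operatorname{def}(\Gamma\times\ZZ)\le \operatorname{rank} H_1 - d(H_2) = 1-1=0$, and that expanding $G$ by a finite presentation with $g$ generators and $r$ relations gives a presentation of $\Gamma\times\ZZ$ with deficiency $(g-r)+1$. Requiring this to be $\le 0$ only tells you $\operatorname{def}(G)\le -1$ — which is perfectly possible (it happens whenever $H_2(G)\ne 0$), so there is no contradiction. Your remark that ``equality $\operatorname{def}(Q)=1$ then forces $\operatorname{def}(G)=0$'' has the inequality backwards: $\operatorname{def}(Q)\le 0$, so $\operatorname{def}(Q)=1$ never occurs, and the scenario you want to rule out never materializes. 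The Grushko route is likewise unfinished — as you yourself flag, there is no clean Grushko-type statement guaranteeing that killing $k$ free $\ZZ$-factors requires $k$ relators (a single relator can tie several free factors together, or use the $G$ part), and you do not supply the counting argument.

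Second, and more fundamentally, you never perform the reduction that drives the paper's proof. The paper observes that one may assume (after quotienting $G$ by $G'=i^{-1}(N)$, where $N$ is the normal closure of the relators) that $G$ \emph{injects} into $\Gamma\times\ZZ$; being a rank-$0$-abelianization subgroup of an abelian group, $G$ is then finite cyclic, say $\ZZ_m$. This collapses the unknown $G$ to something completely explicit. With $G=\ZZ_m$ one rewrites the presentation as $\langle \ZZ^{*(k+1)}\mid r_0,\dots,r_{k-1}\rangle$ and then passes to the finite quotient $\Gamma\times\ZZ_n$, which acquires a \emph{balanced} presentation (equal numbers of generators and relators). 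A group of zero deficiency has trivial Schur multiplier, yet for $n=|\Gamma|$ one computes $M(\ZZ_n\times\ZZ_n)\cong\ZZ_n\ne 1$ — that is the contradiction. Your sketch gestures toward the right circle of ideas (deficiency is controlled by $H_2$, which is exactly the Schur multiplier), but it is missing the two load-bearing steps: the reduction to $G$ finite cyclic, and the passage to a \emph{finite} quotient to obtain a balanced presentation where the Schur-multiplier obstruction actually bites.
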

\begin{proof}
If $N$ is the minimal normal subgroup of $G * \ZZ^{*k}$ containing $\langle r_1,\cdots,r_{k-1}\rangle$, then, by \eqref{eq:s3g.s1.iso}, 
\begin{equation}\label{eq.group.1}
	 \Gamma\times \ZZ \cong \quot{G* \ZZ^{*k}}{N}.
\end{equation}
Consider the standard embedding $i: G\to G*\ZZ^{*k}$, and consider the normal subgroup
\[ G' = i^{-1}(N) \trianglelefteq G. \]
We will reduce to the case $G' = \{ 1 \}$. If $N'$ denotes the minimal normal subgroup in $G*\ZZ^{*k}$ containing $i(G')$, then
\begin{equation*}
	\quot{G*\ZZ^{*k}}{N}\cong \quot{\textstyle \left(\quot{G*\ZZ^{*k}}{N'}\right)}{\textstyle\left(\quot{N}{N'}\right)}
\end{equation*}
If we denote by $\tilde r_i$ the image of $r_i$ under the map $G*\ZZ^{*k} \to (\quot{G}{G'})*\ZZ^{*k}$, and we denote by $\tilde N$ the minimal normal subgroup of $(\quot{G}{G'}) * \ZZ^{*k}$ containing $\langle \tilde r_1,\cdots,\tilde r_{k-1}\rangle$, then the rightmost group above is
\[ \cong \quot{\left(\quot{G}{G'}\right)*\ZZ^{*k}}{ \tilde N }.\]
Also, under the standard embedding $\tilde i: \quot{G}{G'}\to(\quot{G}{G'})*\ZZ^{*k}$, we have
\[(\tilde i)^{-1}(\tilde N)=\{1\}.\]
Thus, by replacing $G$ with $\quot{G}{G'}$ and $r_i$ with $\tilde r_i$, we may assume that we had
\[ i^{-1}(N) = \{ 1 \}, \]
all along as desired. Note that this reduction preserves the property that the rank of the abelianization equals zero. Then, 
\[G\xrightarrow{i}G*\ZZ^{*k} \to \quot{G*\ZZ^{*k}}{N} \cong \Gamma \times \ZZ,\]
(the last isomorphism is \eqref{eq.group.1}) is injective. Therefore $G$ is abelian since $\Gamma \times \ZZ$ is. The rank of the abelianization of $G$, and thus of $G$, is assumed to be $0$, so $G$ is finite. Since $\ZZ$ has no torsion, $G$ must inject into $\Gamma \times \{ 0 \} \cong \Gamma$, which is assumed to be finite and cyclic, so
\[G \cong \ZZ_{m},\quad m\ge 1.\]
Therefore, 
\begin{equation} \label{eq.group.iso}
	\Gamma\times \ZZ \cong \left\langle \ZZ_m*\ZZ^{*k} \bigg\vert r_1,\cdots,r_{k-1}\right\rangle \cong \left\langle \ZZ*\ZZ^{*k} \bigg\vert r_0,r_1,\cdots,r_{k-1}\right\rangle,
\end{equation}
where $r_0$ is a word that gives order $m$ to the generator of the first $\ZZ$ factor. Thus, for every $n\ge 1$, 
\begin{equation} \label{eq.group.2}
	\Gamma\times \ZZ_n \cong \left\langle \ZZ*\ZZ^{*k} \bigg\vert r_0,r_1,\cdots,r_{k-1},r_k\right\rangle,
\end{equation}
with $r_k$ being a word that reduces to $(0, n)$ in \eqref{eq.group.iso}.

We show that \eqref{eq.group.2} must fail for \textit{some} $n$ by comparing the Schur Multiplier (denoted by $M(\cdot)$) of both sides. Indeed, it follows from \eqref{eq.group.2} that $\Gamma \times \ZZ_n$ is a finitely represented group with the same number of generators and relations (a group of zero deficiency), so 
\[ M(\Gamma \times \ZZ_n)=\{1\} \]
by \cite[Lemma 1.2]{Zerodeficiency}. On the other hand, if we choose $n$ to be equal to the order of $\Gamma$, so $\Gamma \cong \mathbf{Z}_n$, then, invoking \cite[Theorem 2.1]{SchurMulti} yields
\[M(\Gamma \times \ZZ_n) = M(\ZZ_n\times \ZZ_n) = M(\ZZ_n)\times M(\ZZ_n)\times (\ZZ_n\otimes \ZZ_n)=\ZZ_n.\]
This is a contradiction since $n \geq 2$. 
\end{proof}

\appendix

\section{Technical lemmas} \label{sec:technical.lemmas}

We first prove Lemmas \ref{lemm:2d.metric.modification}, \ref{lemm:3d.metric.modification}. Our proof relies on the flexibility of two-sided stable minimal hypersurfaces in $3$- and $4$-manifolds due to the second and third authors \cite{LiMantoulidis:metrics.lambda1}.\footnote{Another proof relies on the conformal cobordism theory of Akutagawa--Botvinnik \cite{AkutagawaBotvinnik} and the known path-connectedness of the space of PSC metrics on topologically PSC manifolds due to Weyl \cite{Weyl} in 2D and the first author and Bruce Kleiner \cite{BamlerKleiner:contractibility} in 3D. We thank Demetre Kazaras for bringing this alternative and particularly \cite{AkutagawaBotvinnik} to our attention.} We need one additional piece of notation and an auxiliary lemma. For any closed connected manifold $\Sigma$, set
\[ \sM(\Sigma) := \{ \sigma \in \operatorname{Met}(\Sigma) : -\Delta_\sigma + \tfrac12 R_\sigma \text{ has positive principal eigenvalue} \}; \]
here, $R_\sigma$ denotes the scalar curvature of $\sigma$. This space is denoted $\sM^{>0}_{1/2}(\Sigma)$ in \cite{LiMantoulidis:metrics.lambda1}.

\begin{lemm} \label{lemm:collar}
	Suppose $(\sigma_t)_{t \in [0,1]}$ is a smooth path in $\sM(\Sigma)$ with $\sigma_0' \equiv \sigma_1' \equiv 0$. There exists a smooth $u : \Sigma \times [0,1] \to (0, \infty)$ so that the metric $h = \sigma_t + u^2 dt^2$ on $\Sigma \times [0, 1]$ has the following properties:
	\begin{enumerate}
		\item[(a)] $\Sigma \times \{0\}$ and $\Sigma \times \{1\}$ are totally geodesic;
		\item[(b)] $R_h > 0$ everywhere.
	\end{enumerate}
\end{lemm}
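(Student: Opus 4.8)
The plan is to choose the lapse function of the form $u = u_C := C\,\phi_t$, where $C>0$ is a large constant to be fixed at the very end and, for each $t\in[0,1]$, $\phi_t\in C^\infty(\Sigma)$ is the positive principal eigenfunction of the Schr\"odinger operator $L_t := -\Delta_{\sigma_t} + \tfrac{1}{2}R_{\sigma_t}$, normalized by $\int_\Sigma \phi_t^2\,d\vol_{\sigma_t}=1$; let $\lambda(t)$ be the corresponding eigenvalue, which is positive exactly because $\sigma_t\in\sM(\Sigma)$. Since $\Sigma$ is connected, the bottom of the spectrum of each $L_t$ is a simple eigenvalue with a one-signed eigenfunction, so standard perturbation theory for the smooth family $(L_t)_{t\in[0,1]}$ gives that $(x,t)\mapsto\phi_t(x)$ is smooth and strictly positive on the compact manifold-with-boundary $\Sigma\times[0,1]$, and that $\lambda$ is smooth; in particular $\lambda_\ast := \min_{[0,1]}\lambda>0$, and $\phi_t$ is bounded above and below by positive constants. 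With this $u$, the expression $h = \sigma_t + u^2\,dt^2$ is a smooth Riemannian metric on $\Sigma\times[0,1]$.

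Property (a) comes for free, and it does not even depend on the choice of $u$: for a metric of the form $\sigma_t + u^2\,dt^2$, the second fundamental form $A_{t_0}$ of the slice $\Sigma\times\{t_0\}$ with respect to the unit normal $u^{-1}\partial_t$ equals $\tfrac{1}{2u}\,\partial_t\sigma_t\big|_{t_0}$, and the hypothesis $\sigma_0'\equiv\sigma_1'\equiv 0$ makes this vanish identically at $t_0=0$ and $t_0=1$; hence the two boundary slices are totally geodesic.

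For property (b), I would use the standard cylinder scalar curvature identity (Gauss equation together with the traced Riccati equation with nonconstant lapse): for $h = \sigma_t + u^2\,dt^2$,
\[
R_h \;=\; R_{\sigma_t} \;-\; 2u^{-1}\partial_t H_t \;-\; H_t^2 \;-\; |A_t|^2 \;-\; 2u^{-1}\Delta_{\sigma_t} u,
\]
where $A_t = \tfrac{1}{2u}\,\partial_t\sigma_t$ and $H_t = \tr_{\sigma_t}A_t$ are the second fundamental form and mean curvature of the slices. Substituting $u = u_C = C\phi_t$, the three middle terms are each $O(C^{-2})$ uniformly on $\Sigma\times[0,1]$ (their factors not involving $C$ are smooth, hence bounded on the compact domain, and $\phi_t$ is bounded below), while the outer two terms combine, via the eigenvalue equation $-\Delta_{\sigma_t}\phi_t + \tfrac{1}{2}R_{\sigma_t}\phi_t = \lambda(t)\phi_t$, into
\[
R_{\sigma_t} - 2u_C^{-1}\Delta_{\sigma_t}u_C \;=\; R_{\sigma_t} - 2\phi_t^{-1}\Delta_{\sigma_t}\phi_t \;=\; 2\lambda(t) \;\ge\; 2\lambda_\ast \;>\;0.
\]
Thus $R_h \ge 2\lambda_\ast - K C^{-2}$ for a constant $K$ independent of $C$, which is positive once $C$ is chosen large; for that $C$, the lapse $u = u_C$ does everything required.

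The one ingredient that is not a direct computation is the smooth dependence of $\phi_t$ on the parameter $t$, and this is where connectedness of $\Sigma$ enters (it forces the bottom eigenvalue of $L_t$ to be simple, so the usual perturbation theory applies); the remaining work is routine bookkeeping with the curvature identity together with the compactness of $\Sigma\times[0,1]$ to make the error term uniform in $C$.
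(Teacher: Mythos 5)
Your proof is correct and is essentially the paper's own argument: both take the lapse to be a large constant multiple of the (smoothly varying, simple) principal eigenfunction of $-\Delta_{\sigma_t}+\tfrac12 R_{\sigma_t}$, observe that (a) holds for any lapse once $\sigma_0'\equiv\sigma_1'\equiv 0$, and obtain (b) from the cylinder scalar curvature formula, where the eigenvalue equation produces $2\lambda(t)>0$ as the leading term and the extrinsic-curvature terms are $O(C^{-2})$. The only cosmetic difference is that you write out the Gauss--Riccati identity explicitly, whereas the paper cites it from \cite[Lemma A.1]{LiMantoulidis:metrics.lambda1}.
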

\begin{proof}
	We make use of the curvature formulas from \cite[Lemma A.1]{LiMantoulidis:metrics.lambda1}. Right away we note that (a) is a consequence of $\sigma_0' \equiv \sigma_1' \equiv 0$, no matter what $u$ is. For (b), the key observation is that if $\tilde u : \Sigma \times [0,1] \to (0, \infty)$ is held fixed and $A > 0$ is a constant, both to be determined, then
	\begin{equation} \label{eq:collar.scalar.curv}
		R_{\sigma_t + A^2 \tilde u^2 \, dt^2} = 2 \tilde u^{-1} (\Delta_{\sigma_t} \tilde u + \tfrac12 R_{\sigma_t} \tilde u) + O(A^{-2}) \text{ as } A \to \infty.
	\end{equation}
	Take each $\tilde u(\cdot, t) : \Sigma \to (0, \infty)$ to be a positive principal eigenfunction of $-\Delta_{\sigma_t} + \tfrac12 R_{\sigma_t}$. This can be done smoothly over $\Sigma \times [0,1]$ because the principal eigenvalue is simple; see \cite[Lemma A.1]{MantoulidisSchoen:bartnik}. With this $\tilde u$, the first term on the right hand side of \eqref{eq:collar.scalar.curv} is bounded below by the minimum principal eigenvalue over $t$, thus uniformly positive since $\sigma_t \in \sM(\Sigma)$ for all $t$. Now, (b) follows with $u := A \tilde u$ and $A \gg 1$.
\end{proof}

\begin{proof}[Proof of Lemma \ref{lemm:2d.metric.modification}]
	Conclusion (a) is a well-known consequence of \cite{SchoenYau:incompressible}. 
	
	To arrange conclusion (b), we first cut $(M, g)$ along $\Sigma$. This introduces two boundary components isometric to $(\Sigma, \sigma_0)$. We will join them together using a PSC cylinder constructed using Lemma \ref{lemm:collar}, and its reflection. 
	
	To that end, note that $\sigma_0 \in \mathcal{M}(\Sigma)$ by \cite[Lemma C.6]{LiMantoulidis:metrics.lambda1}. The auxiliary metric $\sigma$ on $\Sigma$ also satisfies $\sigma \in \mathcal{M}(\Sigma)$ because it has positive scalar curvature. The existence of a smooth path $(\sigma_t)_{t \in [0,1]} \subset \sM(\Sigma)$ with $\sigma_1 = \sigma$ is guaranteed by \cite[Proposition 1.1]{MantoulidisSchoen:bartnik}; the condition $\sigma_0' \equiv \sigma_1' \equiv 0$ is trivially arranged by a time reparametrization. Then Lemma \ref{lemm:collar} yields a PSC metric $h$ on $\Sigma \times [0, 1]$ that induces $\sigma_t$ on $\Sigma \times \{t\}$ for all $t \in [0,1]$, with both boundary components totally geodesic. We then extend $h$ to a metric on $\Sigma \times [0,4]$ by taking it to be a product metric $(\Sigma, \sigma) \times (1, 4]$.
	
	Along each boundary component of $(M, g) \setminus \Sigma$, we glue in a copy of $(\Sigma \times [0, 4], h)$, identifying the boundary $\Sigma$ with $\Sigma \times \{ 0\}$, and the two copies of $\Sigma \times \{4\}$ with each other, suitably reversing orientations. This yields a new manifold diffeomorphic to $M$, with a Lipschitz metric $\hat g$ that satisfies all desired conclusions of the lemma, except it is only smooth away from the various copies of $\Sigma \times \{0, 1\}$. Since these hypersurfaces are minimal from both sides in $(M, \hat g)$, they can be smoothed out locally while preserving PSC. One can do this using \cite{Miao:pmt.corners}, which is easily localized using cut-off functions in view of the strict positivity of scalar curvature in our case; see \cite[Lemma 7.1]{LiMantoulidis:metrics.lambda1} for details.
\end{proof}

\begin{proof}[Proof of Lemma \ref{lemm:3d.metric.modification}]
	The proof of Lemma \ref{lemm:2d.metric.modification} carries through verbatim, except conclusion (a) is due to \cite{SchoenYau:structure.psc} and one needs to invoke \cite[Proposition 3.1]{LiMantoulidis:metrics.lambda1} rather than \cite[Proposition 1.1]{MantoulidisSchoen:bartnik} when proving (b).
\end{proof}

\begin{proof}[Proof of Lemma \ref{lemm:homology.nd.stable}]
We offer a generalization of the method \cite[Lemma 19]{ChodoshLi:aspherical}.

We proceed by induction.
Suppose that we have constructed a sequence of pairwise disjoint, two-sided, stable hyperminimal surfaces $\Sigma_1, \ldots, \Sigma_k \subset M$ such that $\breve M_k := M \setminus \cup_{j=1}^k \Sigma_j$ is connected.
If the map 
\begin{equation*} \label{eq_mapHn1}
i_k : H_{n-1} (\partial \breve M_k; \ZZ) \to H_{n-1}(\breve M_k; \ZZ)
\end{equation*}
is surjective, we are done; set $\Sigma := \cup_{j=1}^k \Sigma_j$ in the statement of the lemma. So let us assume it is not. Using geometric measure theory \cite{Federer:GMT} (this is the source of the dimensional restriction $n \leq 7$) we can find a closed, connected, two-sided, stable minimal surface $\Sigma_{k+1} \subset \breve M_{k}$ such that $\breve M_{k+1} := \breve M_k \setminus \Sigma_{k+1}$ is connected and such that
\begin{equation} \label{eq_Sigmaimgik}
[\Sigma_{k+1}] \not\in \operatorname{img} i_k.
\end{equation}
To see that this process terminates, we will show that the cokernels 
\[ \Gamma_k := H_{n-1}(\breve M_{k};\ZZ) / \operatorname{img} i_{k} \]
have strictly decreasing rank. Since $\Gamma_0 = H_{n-1}(M;\ZZ)$ is finitely generated, the process will have to terminate after finitely many steps.

So, it remains to prove the ranks decrease strictly. We apply the Mayer--Vietoris sequence to the open cover of $\breve M_k$ consisting of $\breve M_{k+1}$ and a tubular neighborhood of $\Sigma_{k+1}$. Writing $H_j (\Sigma_{k+1};\ZZ)$ and $H_{j}(\Sigma_{k+1} \times \{ \pm 1\};\ZZ)$ for the  homology groups of the tubular neighborhood of $\Sigma_{k+1}$ and for the intersection of both subsets, respectively:
\[ H_{n-1}(\Sigma_{k+1} \times \{ \pm 1 \} ;\ZZ) \to H_{n-1}(\breve M_{k+1};\ZZ) \oplus H_{n-1}(\Sigma_{k+1};\ZZ) \to H_{n-1} (\breve M_k;\ZZ) \]
This implies that the kernel of the natural map
\begin{equation} \label{eq_Hn1MM}
 H_{n-1}(\breve M_{k+1};\ZZ) \to H_{n-1} (\breve M_k;\ZZ), 
\end{equation}
which is induced by the inclusion map $\breve M_{k+1} 
\hookrightarrow \breve M_k$, is contained in $\operatorname{img} i_{k+1}$.
Thus the map \eqref{eq_Hn1MM} descends to an injection 
\begin{equation} \label{eq_Gammak_injection}
	\Gamma_{k+1} \hookrightarrow H_{n-1}(\breve M_{k};\ZZ) \big/ (\operatorname{img} i_{k} + \ZZ \cdot [\Sigma_{k+1}]) = \Gamma_k / (\ZZ \cdot [[\Sigma_{k+1}]]),
\end{equation}
where $[[\Sigma_{k+1}]]$ denotes the equivalence class of $[\Sigma_{k+1}]$ within $\Gamma_k$. 

Note that the $\Gamma_k$ are all torsion-free. Indeed, from the homology long exact sequence
\[ H_{n-1}(\partial \breve M_k; \ZZ)\xrightarrow{i_*} H_{n-1}(\breve M_k; \ZZ) \rightarrow H_{n-1}(\breve M_k, \partial \breve M_k; \ZZ) \xrightarrow{\partial} H_{n-2}(\partial \breve M; \ZZ)\]
we get $\Gamma_k \cong \ker \partial$. However, $H_{n-1}(\breve M_k, \partial \breve M_k; \ZZ) \cong H^1(\breve M_k; \ZZ)$ by Lefschetz duality. The latter is torsion-free by the universal coefficient theorem. Thus, $\Gamma_k$ is torsion-free. 

Since the $\Gamma_k$ are torsion free, $\rank \Gamma_{k+1} < \rank \Gamma_k$ follows from \eqref{eq_Sigmaimgik} and \eqref{eq_Gammak_injection}. 
\end{proof}

The following lemma is well-known to the experts but we include a proof for clarity:

\begin{lemm} \label{lemm:1.surg.b1.b2}
	Suppose $M$, $M'$ are closed, oriented $4$-manifolds and that $M$ is obtained from $M'$ by a $1$-surgery. Then $b_1(M) \leq b_1(M')$ and $b_2(M) \geq b_2(M')$. In fact, either
	\[ b_1(M) = b_1(M') - 1 \text{ and } b_2(M) = b_2(M') \]
or
	\[ b_1(M) = b_1(M') \text{ and } b_2(M) = b_2(M') + 2. \]
\end{lemm}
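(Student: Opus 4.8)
The plan is to compute how the Betti numbers change under a $1$-surgery on a closed oriented $4$-manifold $M'$, which replaces an embedded $S^1 \times D^3$ with $S^2 \times D^2$ (equivalently, $M'$ and $M$ share a common complement $N$, with $\partial N = S^1 \times S^2$, and $M' = N \cup (S^1 \times D^3)$, $M = N \cup (D^2 \times S^2)$). First I would set up the Mayer--Vietoris sequences (with $\ZZ$ coefficients, or $\QQ$ coefficients since only ranks matter) for both decompositions and compare. The homology of $S^1 \times S^2$ is $\ZZ, \ZZ, \ZZ, \ZZ$ in degrees $0,1,2,3$; that of $S^1 \times D^3 \simeq S^1$ is $\ZZ, \ZZ, 0, 0$; that of $D^2 \times S^2 \simeq S^2$ is $\ZZ, 0, \ZZ, 0$.

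The key step is to read off the relevant pieces of both long exact sequences. For $M' = N \cup (S^1\times D^3)$ the interesting part of Mayer--Vietoris around degrees $2$ and $1$ is
\[ H_2(S^1\times S^2) \to H_2(N)\oplus H_2(S^1) \to H_2(M') \to H_1(S^1\times S^2) \to H_1(N)\oplus H_1(S^1) \to H_1(M') \to 0, \]
i.e. $\ZZ \to H_2(N) \to H_2(M') \to \ZZ \xrightarrow{\alpha'} H_1(N)\oplus\ZZ \to H_1(M') \to 0$, while for $M = N\cup(D^2\times S^2)$ it is
\[ \ZZ \to H_2(N)\oplus\ZZ \to H_2(M) \to \ZZ \xrightarrow{\alpha} H_1(N) \to H_1(M) \to 0, \]
using $H_2(S^1\times S^2)=\ZZ$ generated by the $S^2$-factor, which dies in $H_2(D^2\times S^2)$ and survives (maps to the $S^2$ generator) in $H_2(N)\oplus\ZZ$ only if it's nontrivial in $H_2(N)$. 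The cleanest bookkeeping is via the Euler-characteristic / alternating-sum identity: $\chi(M) = \chi(M') - \chi(S^1\times S^2) + \chi(S^2) = \chi(M') + 2$ (since $\chi(S^1\times S^2)=0$, $\chi(S^2)=2$), so $2 - 2b_1(M) + b_2(M) = 2 - 2b_1(M') + b_2(M') + 2$, giving $b_2(M) - b_2(M') = 2 + 2(b_1(M) - b_1(M'))$. This single equation already forces the dichotomy once I show $b_1(M) \in \{b_1(M'), b_1(M')-1\}$ and $b_1(M) \le b_1(M')$.

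To pin down $b_1$, I would argue directly: $\pi_1(M)$ is obtained from $\pi_1(M')$ by killing the normal closure of the surgery curve $\gamma$ (van Kampen, since $D^2\times S^2$ is simply connected and $S^1\times S^2$ has $\pi_1 = \ZZ$ generated by $\gamma$), so $H_1(M;\QQ)$ is a quotient of $H_1(M';\QQ)$, whence $b_1(M) \le b_1(M')$, and the quotient is by at most one element, so $b_1(M') - 1 \le b_1(M)$. Combined with the Euler characteristic identity from the previous step, $b_1(M) = b_1(M')$ forces $b_2(M) = b_2(M') + 2$ and $b_1(M) = b_1(M')-1$ forces $b_2(M) = b_2(M')$, which is exactly the claimed dichotomy; the crude bounds $b_1(M)\le b_1(M')$, $b_2(M)\ge b_2(M')$ follow immediately.

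The main obstacle is purely bookkeeping rather than conceptual: one must be careful that the two cases in the dichotomy are exactly governed by whether $[\gamma] \in H_1(M';\QQ)$ is nonzero (case: $b_1$ drops, $b_2$ fixed) or torsion/zero (case: $b_1$ fixed, $b_2$ jumps by $2$), and to make sure Poincaré duality is used consistently so that the jump in $b_2$ is genuinely $2$ and not $1$ — this is where the closedness and orientability of $M$, $M'$ are essential. I would double-check the $b_2$ count by noting that in the torsion/zero case the surgery sphere $\{pt\}\times S^2 \subset D^2\times S^2$ and a dual sphere (the cocore $D^2$ capped off — which is actually a $2$-sphere $D^2\times\{pt\}$ glued to a disk in $N$ bounding $\gamma$) contribute a hyperbolic summand $\binom{0\ 1}{1\ 0}$ or $\binom{0\ 1}{1\ \pm1}$ to the intersection form, accounting for the $+2$. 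The Euler characteristic shortcut makes the whole argument short, so I would present that as the main line and relegate the explicit Mayer--Vietoris diagram chase to a remark or omit it.
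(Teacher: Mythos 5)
Your proposal is correct and follows essentially the same route as the paper: establish $b_1(M)\in\{b_1(M'),b_1(M')-1\}$ (you via van Kampen on the surgery curve, the paper via a rank count in Mayer--Vietoris, which amounts to the same thing), then combine $\chi(M)=\chi(M')+2$ with Poincar\'e duality ($b_3=b_1$, $b_0=b_4=1$) to force the stated dichotomy. No gaps.
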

\begin{proof}
	Let $\breve M$ be the manifold obtained by removing a $\SS^1\times B^3$ from $M'$. The Mayer--Vietoris sequence implies that
	\[H_0(\SS^1\times \SS^2; \ZZ) \xrightarrow{((i_0)_*, (j_0)_*)} H_0(\breve M; \ZZ)\oplus H_0(\SS^1\times B^3; \ZZ) \rightarrow H_0(M'; \ZZ) \to \{1\}.\]
	For $H_0$, the map $((i_0)_*, (j_0)_*)$ is the diagonal map, so it is injective. Therefore, again from Mayer--Vietoris:
	\[H_1(\SS^1\times \SS^2; \ZZ) \xrightarrow{((i_1)_*, (j_1)_*)} H_1(\breve M; \ZZ)\oplus H_1(\SS^1\times B^3; \ZZ) \rightarrow H_1(M'; \ZZ)\rightarrow \{1\}. \]
	Observe that $j_1: \SS^1\times \SS^2\to \SS^1\times B^3$ induces an injection on $H_1$. Counting ranks in
	\[\quot{\left(H_1(\breve M; \ZZ)\oplus H_1(\SS^1\times B^3; \ZZ)\right)}{\imag ((i_1)_*, (j_1)_*))} \cong H_1(M'; \ZZ),\]
	we conclude that $b_1(\breve M)= b_1(M')$.

	Similarly, 
	\[H_1(\SS^2\times \SS^1; \ZZ) \xrightarrow{((i_1)_*, (k_1)_*))} H_1(\breve M; \ZZ)\oplus H_1(\SS^2\times B^2; \ZZ)\rightarrow H_1(M; \ZZ) \to \{1\}. \]
	Counting ranks, $b_1(\breve M) = b_1(M) + \rank \imag((i_1)_*, (k_1)_*) = b_1(M) +  \rank \imag (i_1)_*$. Combing these, we have that
	\begin{equation} \label{eq:1.surg.b1}
		b_1(M)=b_1(M') \text{ or } b_1(M)=b_1(M')-1,
	\end{equation}
	depending on whether $\rank \imag (i_1)_* = 0$ or $1$. 

	On the other hand, the Euler characteristics of $X, Y$ satisfy:
	\begin{align*}
		\chi(M') = \chi(\breve M) + \chi(\SS^1\times B^3) - \chi(\SS^1\times \SS^2),\\
		\chi(M) = \chi(\breve M) + \chi(\SS^2\times B^2) - \chi(\SS^1\times \SS^2).
	\end{align*}
	Therefore, $\chi(M)=\chi(M')+2$. The result follows in combination with \eqref{eq:1.surg.b1}.
\end{proof}

\bibliography{main} 
\bibliographystyle{amsalpha}

\end{document}